\documentclass[11pt]{amsart}

\usepackage[T1]{fontenc}
\usepackage[utf8]{inputenc}
\usepackage{lmodern}
\usepackage{amsmath,amssymb,amsthm,mathtools}
\usepackage{enumitem}
\usepackage{microtype}
\usepackage[colorlinks=true,linkcolor=blue,citecolor=blue,urlcolor=blue]{hyperref}

\numberwithin{equation}{section}

\newtheorem{theorem}{Theorem}[section]
\newtheorem{lemma}[theorem]{Lemma}
\newtheorem{proposition}[theorem]{Proposition}
\newtheorem{corollary}[theorem]{Corollary}
\theoremstyle{definition}

\theoremstyle{remark}
\newtheorem{remark}[theorem]{Remark}

\DeclareMathOperator{\arsinh}{arsinh}
\DeclareMathOperator{\Arg}{Arg}
\DeclareMathOperator{\Log}{Log}
\DeclareMathOperator{\Li}{Li}
\DeclareMathOperator{\Real}{Re}
\DeclareMathOperator{\Imag}{Im}

\newcommand{\C}{\mathbb C}
\newcommand{\Q}{\mathbb Q}
\newcommand{\N}{\mathbb N}

\newcommand{\OO}{\mathcal O}
\newcommand{\EE}{\mathcal E}
\newcommand{\dd}{\,d}

\title{Hyperbolic arcsine kernels, finite Fourier filters, and quartic central-binomial harmonic sums}

\author{K. Srinivasa Raghava}
\address{Pie Mathematics Association, India}
\email{srinivasaraghavak@gmail.com}

\subjclass[2020]{Primary 33C20; Secondary 11M06, 05A10, 40A25}
\keywords{central binomial coefficient, harmonic number, inverse sine, hypergeometric series, root-of-unity filter, log-sine integral, Bell polynomial}

\begin{document}

\begin{abstract}
Classical expansions of powers of the inverse sine contain central-binomial coefficients and finite repeated harmonic sums.  We place the odd-square and ordinary-square coefficient families into two hyperbolic arcsine kernels and use these kernels as generating functions on which finite Fourier projection and Mellin deformation can be carried out before specialization.  The quadratic projection extracts quartic subsequences and gives identities involving \(\binom{4r}{2r}\), \(\pi\), and \(L=\log(1+\sqrt2)\).  The same projection admits accelerated interior forms and, after Mellin deformation, denominator-power and logarithmic companions with polylogarithms at \((\sqrt2-1)^2\).  The paper also records the square-law convolution between the two kernels, periodic-weight filters, finite spectral truncations at negative square parameters, and the analytic details needed for branch choices, boundary convergence, and termwise Mellin operations.  A final comparison shows that direct quartic kernels lead to a different \({}_4F_3\) family.
\end{abstract}

\maketitle
\markboth{K. SRINIVASA RAGHAVA}{FILTERED HYPERBOLIC ARCSINE KERNELS}

\section{Introduction}

Central-binomial series have a long history.  Lehmer \cite{Lehmer1985} is a standard reference for early modern identities of this type.  Borwein, Broadhurst and Kamnitzer \cite{BorweinBroadhurstKamnitzer2001} developed central-binomial sums in connection with zeta values and Clausen values.  For powers of the inverse sine, Borwein and Chamberland \cite{BorweinChamberland2007} gave compact nested-sum formulas for integer powers of \(\arcsin x\).  These expansions were later used in harmonic-binomial Euler-type series by Nimbran, Levrie and Sofo \cite{NimbranLevrieSofo2022} and by Lin \cite{Lin2025}.  Recent papers and preprints of Dilcher and Vignat \cite{DilcherVignat2025a,DilcherVignat2025b,DilcherVignat2025c} develop broad classes of arcsine and central-binomial series through arcsine moments, integral transforms, Bell-polynomial methods, and generalized harmonic sums.  Other central-binomial harmonic sums have been studied by generating-function, Fourier-Legendre, and arithmetic methods; see, for example, \cite{Chen2016,CantariniDAurizio2019,LiChu2024,SunZhou2026}.  The odd multiple-zeta background is developed in \cite{Hoffman2019,Murakami2022}.

The finite repeated-square coefficient arrays used below belong to this classical arcsine-power layer.  The purpose of the paper is to use them differently.  We combine the odd-square and ordinary-square coefficient families into the paired kernels
\begin{equation}\label{eq:intro-kernels}
        \OO(z,x)=\frac{\sinh(\sqrt z\,\arcsin x)}{\sqrt z},
        \qquad
        \EE(z,x)=\frac{\cosh(2\sqrt z\,\arcsin x)-1}{z}.
\end{equation}
The odd kernel carries finite repeated sums over odd reciprocal squares, while the even kernel carries finite repeated sums over ordinary reciprocal squares.  Once the two families are written in this form, finite Fourier projection can be applied at the kernel level, before the endpoint specialization.  This is the point at which the present work departs from the usual unfiltered arcsine-power expansions.

The quadratic projection is especially transparent.  Extracting the even subsequence \(n=2r\) from the original central-binomial kernel changes
\[
       \binom{2n}{n}\quad\text{into}\quad \binom{4r}{2r},
       \qquad 4^n\quad\text{into}\quad 16^r.
\]
Thus the quartic coefficients in the main formulas arise from a filtered subsequence, not from replacing \(\binom{2n}{n}\) directly by \(\binom{4n}{2n}\).  The direct replacement leads to a different hypergeometric problem, recorded for comparison in Appendix \ref{app:literal}.

Put
\begin{equation}\label{eq:L-def-intro}
        L:=\arsinh 1=\log(1+\sqrt2).
\end{equation}
The two most concrete identities proved here are, for every \(m\ge0\),
\begin{equation}\label{eq:intro-main-odd}
 \sum_{r=0}^{\infty}
 \frac{\binom{4r}{2r}}{16^r(4r+1)}
 t_{2r}(\{2\}^m)
 =
 \frac{\pi^{2m+1}+(-1)^m(2L)^{2m+1}}
 {2^{2m+2}(2m+1)!},
\end{equation}
and, for every \(m\ge1\),
\begin{equation}\label{eq:intro-main-even}
 \sum_{r=1}^{\infty}
 \frac{16^r}{r^2\binom{4r}{2r}}
 \zeta_{2r-1}(\{2\}^{m-1})
 =
 \frac{2\{\pi^{2m}+(-1)^m(2L)^{2m}\}}
 {(2m)!}.
\end{equation}
Their odd-residue companions follow from the same quadratic filter.  Evaluating the filter at interior points gives accelerated versions with explicit tail bounds.  Applying Mellin deformation to the quadratic projection gives denominator-power identities and logarithmic companions; in low weights these involve \(\operatorname{Li}_2((\sqrt2-1)^2)\), \(\operatorname{Li}_3((\sqrt2-1)^2)\), and \(\zeta(3)\).

The route through the paper follows this chain of ideas.  We begin in Section \ref{sec:kernels} with the finite repeated sums and the paired kernel theorem.  Section \ref{sec:algebra} records the square law \(\EE=2\OO^2\), its coefficient convolution, and two supplementary algebraic consequences.  Section \ref{sec:branches} fixes the principal branches and gives the convergence and Mellin-operation lemmas used later.  Section \ref{sec:filters} proves the root-of-unity, periodic-weight, and filtered Mellin kernel formulas.  The main quartic identities appear in Section \ref{sec:quartic}; their accelerated interior forms and tail estimates follow in Section \ref{sec:interior}.  Sections \ref{sec:mellin} and \ref{sec:quartic-log} derive the explicit quadratic Mellin theorem and the logarithmic companions.  A short final section illustrates that the filters are not restricted to parity, and the appendices contain the Bell-polynomial dictionary, the direct quartic-kernel comparison, and a note on numerical checks.

\section{Finite repeated sums and the paired kernels}\label{sec:kernels}

For \(r\in\N\) and \(n\ge0\), let
\begin{equation}\label{eq:H-and-O}
   H_n^{(r)}:=\sum_{k=1}^{n}\frac1{k^r},
   \qquad
   O_n^{(r)}:=\sum_{k=1}^{n}\frac1{(2k-1)^r}
   =H_{2n}^{(r)}-2^{-r}H_n^{(r)}.
\end{equation}
Empty sums are zero.  For \(m\ge0\), define finite repeated-\(2\) sums by
\begin{equation}\label{eq:zeta-finite-def}
 \zeta_n(\{2\}^m)
 :=\sum_{n\ge n_1>\cdots>n_m\ge1}
   \frac1{n_1^2\cdots n_m^2},
\end{equation}
\begin{equation}\label{eq:t-finite-def}
 t_n(\{2\}^m)
 :=\sum_{n\ge n_1>\cdots>n_m\ge1}
   \frac1{(2n_1-1)^2\cdots(2n_m-1)^2}.
\end{equation}
The convention is \(\zeta_n(\varnothing)=t_n(\varnothing)=1\), and the sums are zero when \(m>n\).  The notation \(\zeta_n(\{2\}^m)\) is finite; the Riemann zeta function is written as \(\zeta(s)\) with a single complex argument.

\begin{lemma}\label{lem:finite-products}
For every integer \(n\ge1\),
\begin{equation}\label{eq:zeta-product}
 \sum_{m\ge0}\zeta_{n-1}(\{2\}^m)u^m
 =\prod_{j=1}^{n-1}\left(1+\frac{u}{j^2}\right),
\end{equation}
\begin{equation}\label{eq:t-product}
 \sum_{m\ge0}t_n(\{2\}^m)u^m
 =\prod_{j=1}^{n}\left(1+\frac{u}{(2j-1)^2}\right).
\end{equation}
\end{lemma}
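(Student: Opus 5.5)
The plan is to expand the finite product on the right of \eqref{eq:zeta-product} and of \eqref{eq:t-product} directly and identify the coefficients with elementary symmetric functions. Fix \(N\ge0\) and numbers \(a_1,\dots,a_N\). The basic identity is
\[
 \prod_{j=1}^{N}(1+a_ju)=\sum_{m=0}^{N}e_m(a_1,\dots,a_N)\,u^m,
 \qquad
 e_m=\sum_{N\ge n_1>\cdots>n_m\ge1}a_{n_1}\cdots a_{n_m},
\]
with \(e_0=1\). This identity is a polynomial identity in \(u\).

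To prove it, I will induct on \(N\). The case \(N=0\) is the empty product, which equals \(1=e_0\). For the inductive step, multiply the identity for \(N-1\) by \(1+a_Nu\). Comparing coefficients of \(u^m\) then gives
\[
 e_m^{(N)}=e_m^{(N-1)}+a_N\,e_{m-1}^{(N-1)}.
\]
This is exactly the split of the strictly decreasing chains \(N\ge n_1>\cdots>n_m\ge1\) according to whether \(n_1=N\) or \(n_1\le N-1\). When \(n_1=N\), the remaining indices form a chain bounded by \(N-1\). Equivalently, and without induction, one can expand the product by choosing from each factor either \(1\) or \(a_ju\). Each choice corresponds to a subset of \(\{1,\dots,N\}\), which can be listed in decreasing order as \(n_1>\cdots>n_m\).

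Then I specialize. Taking \(N=n-1\) and \(a_j=1/j^2\) recovers \(e_m=\zeta_{n-1}(\{2\}^m)\) by \eqref{eq:zeta-finite-def}, which gives \eqref{eq:zeta-product}. Taking \(N=n\) and \(a_j=1/(2j-1)^2\) recovers \(t_n(\{2\}^m)\) by \eqref{eq:t-finite-def}, which gives \eqref{eq:t-product}.

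The conventions need checking, since they are the only place where anything could go wrong. The empty chain (\(m=0\)) contributes \(1\), matching \(\zeta_n(\varnothing)=t_n(\varnothing)=1\). For \(m>N\) there are no chains, so the sums vanish, consistent with the stated convention. This also shows that the left-hand sides are in fact polynomials, so no convergence issue arises. The case \(n=1\) in \eqref{eq:zeta-product} gives an empty product equal to \(1\), and correctly only \(\zeta_0(\varnothing)=1\) survives.
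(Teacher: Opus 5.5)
Your proposal is correct and matches the paper's proof, which simply notes that the coefficient of \(u^m\) in each finite product is the elementary symmetric polynomial of degree \(m\) in the alphabet \(1,2^{-2},\ldots,(n-1)^{-2}\) (respectively \(1,3^{-2},\ldots,(2n-1)^{-2}\)), and that this is \(\zeta_{n-1}(\{2\}^m)\) (respectively \(t_n(\{2\}^m)\)) by definition. Your induction or subset-expansion argument, together with the checks of the empty-sum and \(m>N\) conventions, just spells out that standard expansion in more detail.
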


\begin{proof}
The coefficient of \(u^m\) in the first product is the elementary symmetric polynomial of degree \(m\) in
\[
       1,\frac1{2^2},\ldots,\frac1{(n-1)^2}.
\]
This is exactly \(\zeta_{n-1}(\{2\}^m)\).  The second product is identical, with the alphabet
\[
       1,\frac1{3^2},\ldots,\frac1{(2n-1)^2}.
\]
\end{proof}

The Bell-polynomial conversion to ordinary harmonic numbers is recorded in Appendix \ref{app:bell}.  We now state the standard trigonometric hypergeometric evaluations used in the kernel proof.

\begin{lemma}\label{lem:gauss}
For \(a\in\C\) and \(|x|<1\),
\begin{equation}\label{eq:gauss-one}
 {}_2F_1\left(\begin{matrix}a,1-a\\[2pt]1/2\end{matrix};x^2\right)
 =\frac{\cos((2a-1)\arcsin x)}{\sqrt{1-x^2}},
\end{equation}
\begin{equation}\label{eq:gauss-two}
 {}_2F_1\left(\begin{matrix}1+a,1-a\\[2pt]3/2\end{matrix};x^2\right)
 =\frac{\sin(2a\arcsin x)}{2ax\sqrt{1-x^2}}.
\end{equation}
The second identity is interpreted by continuity at \(a=0\).
\end{lemma}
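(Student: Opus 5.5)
I will prove both identities by showing that each right-hand side satisfies the second-order ODE whose power-series solution at the origin is the left-hand side, with matching normalization. Put \(\theta=\arcsin x\) and \(y=x^2\). A function \(F(y)\) is annihilated by the hypergeometric operator
\[
y(1-y)F''+\{c-(A+B+1)y\}F'-ABF=0
\]
with parameters \(A,B,c\). Its analytic solution at \(y=0\) with \(F(0)=1\) is unique whenever \(c\notin\{0,-1,-2,\dots\}\). Both \(c=1/2\) and \(c=3/2\) are admissible here, so the whole task reduces to two checks for each identity: the ODE and the value at \(y=0\).

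For \eqref{eq:gauss-one} I take \(A=a\), \(B=1-a\), \(c=1/2\), so that \(A+B+1=2\) and \(AB=a(1-a)\). The ODE is easiest to verify in the variable \(x\). Since \(d/dy=(2x)^{-1}d/dx\), the operator becomes a multiple of
\[
(1-x^2)f''+\Bigl(\tfrac{2c-1}{x}-(2A+2B+1)x\Bigr)f'-4ABf.
\]
With \(c=1/2\) the coefficient \(2c-1\) vanishes, and the operator becomes \((1-x^2)f''-3xf'-4a(1-a)f\). Now set \(f=g/\sqrt{1-x^2}\) with \(g=\cos(\nu\theta)\) and \(\nu=2a-1\). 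Using \(\theta'=(1-x^2)^{-1/2}\), the function \(g\) satisfies \((1-x^2)g''-xg'+\nu^2g=0\). Substituting \(f=g(1-x^2)^{-1/2}\) into the operator gives \((1-x^2)^{-1/2}\) times
\[
(1-x^2)g''-xg'+(\nu^2-1-4a(1-a)+\dots)g.
\]
I will carry this substitution out and check that the constant collapses, which happens because \(\nu^2-1=4a^2-4a=-4a(1-a)\). The right side is even in \(x\), so it is analytic in \(y\), and its value at \(x=0\) is \(1\). Uniqueness then gives \eqref{eq:gauss-one}.

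For \eqref{eq:gauss-two} I take \(A=1+a\), \(B=1-a\), \(c=3/2\). I expect the cleanest route is not to redo the ODE but to differentiate \eqref{eq:gauss-one}. The contiguous identity
\[
\frac{d}{dy}\,{}_2F_1(A,B;c;y)=\frac{AB}{c}\,{}_2F_1(A+1,B+1;c+1;y)
\]
applied to \eqref{eq:gauss-one} with its parameter replaced by \(a'\) gives \({}_2F_1(a'+1,2-a';3/2;y)\). Choosing \(a'=a\) would produce \((1+a,2-a)\), which is not the pair we need. I will therefore adjust instead by the elementary Euler-type transformation \((1-y)^{c-A-B}\). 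Since \(c-A-B=3/2-2=-1/2\), this gives
\[
{}_2F_1(1+a,1-a;3/2;y)=(1-y)^{-1/2}\,{}_2F_1(1/2-a,1/2+a;3/2;y).
\]
The remaining function \({}_2F_1(1/2-a,1/2+a;3/2;x^2)\) equals \(\sin(2a\theta)/(2ax)\). I will verify this by the same ODE method: the function \(h=\sin(2a\theta)\) satisfies \((1-x^2)h''-xh'+4a^2h=0\), and dividing by \(x\) shifts \(c\) from \(1/2\) to \(3/2\).

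For normalization, \(\sin(2a\theta)/(2ax)\to1\) as \(x\to0\). At \(a=0\), both sides are entire in \(a\), with limit \(\theta/(x\sqrt{1-x^2})\), so the case \(a=0\) follows by continuity. The main obstacle is only bookkeeping in the change of variables \(y=x^2\): keeping track of the \(2c-1\) term and of the factor \(1/x\). I will handle this by writing the operator in \(x\) once, in general form, and reusing it for both identities.
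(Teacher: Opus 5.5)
Your proof is correct, and its skeleton matches the paper's. For the second identity you apply Euler's transformation to reduce ${}_2F_1(1+a,1-a;3/2;x^2)$ to $(1-x^2)^{-1/2}\,{}_2F_1(1/2-a,1/2+a;3/2;x^2)$, as the paper does. You then identify the latter with $\sin(2a\theta)/(2ax)$, which is the paper's ``standard identity'' with $b=1/2+a$.

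The genuine difference is how the two trigonometric evaluations (for $c=1/2$ and $c=3/2$) are obtained. The paper quotes them as classical facts of Gauss. You prove them: you rewrite the hypergeometric operator in the variable $x$, and you use that $\cos(\nu\theta)$ and $\sin(2a\theta)$ satisfy $(1-x^2)h''-xh'+\mu^2h=0$, because $(1-x^2)\,d^2/dx^2-x\,d/dx$ is just $d^2/d\theta^2$. You then invoke uniqueness of the solution analytic at $y=0$ with $F(0)=1$. That uniqueness holds because the recurrence $(n+1)(n+c)F_{n+1}=(n+A)(n+B)F_n$ determines every coefficient when $c\notin\{0,-1,-2,\dots\}$.

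Your route costs a few lines of computation but is self-contained. It also makes transparent that the identities hold for all complex $a$ and all complex $|x|<1$ with principal branches, since both sides are analytic there. The paper's version is a two-line citation.

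Two small bookkeeping corrections. First, substituting $f=g(1-x^2)^{-1/2}$ into $(1-x^2)f''-3xf'-4a(1-a)f$ gives exactly $(1-x^2)^{-1/2}\bigl\{(1-x^2)g''-xg'+(1-4a(1-a))g\bigr\}$, with no further terms. Since $1-4a(1-a)=\nu^2$, this vanishes, so your displayed constant ``$\nu^2-1-4a(1-a)+\dots$'' should be replaced by $1-4a(1-a)$. Second, for the $c=3/2$ evaluation you should say, as you did for the first identity, that $\sin(2a\theta)/(2ax)$ is even in $x$ with a removable singularity at $0$. That is what makes it analytic in $y$, so the uniqueness argument applies.

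The step ``dividing by $x$ shifts $c$ from $1/2$ to $3/2$'' is right. With $f=h/x$, the operator $(1-x^2)f''+(2/x-3x)f'-(1-4a^2)f$ becomes $x^{-1}\{(1-x^2)h''-xh'+4a^2h\}$.
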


\begin{proof}
Put \(x=\sin\theta\), where \(|\Re\theta|<\pi/2\).  Gauss's trigonometric identity gives
\[
 {}_2F_1\left(\begin{matrix}a,1-a\\[2pt]1/2\end{matrix};\sin^2\theta\right)
 =\frac{\cos((2a-1)\theta)}{\cos\theta},
\]
which is \eqref{eq:gauss-one}.  Euler's transformation gives
\[
 {}_2F_1\left(\begin{matrix}1+a,1-a\\[2pt]3/2\end{matrix};x^2\right)
 =(1-x^2)^{-1/2}
 {}_2F_1\left(\begin{matrix}1/2+a,1/2-a\\[2pt]3/2\end{matrix};x^2\right).
\]
The standard identity
\[
 {}_2F_1\left(\begin{matrix}b,1-b\\[2pt]3/2\end{matrix};\sin^2\theta\right)
 =\frac{\sin((2b-1)\theta)}{(2b-1)\sin\theta}
\]
with \(b=1/2+a\) proves \eqref{eq:gauss-two}.
\end{proof}

\begin{theorem}\label{thm:kernels}
For \(|x|<1\), define
\begin{equation}\label{eq:odd-kernel-def}
 \OO(z,x)
 :=\sum_{n=0}^{\infty}
 \frac{\binom{2n}{n}}{4^n(2n+1)}
 \prod_{j=1}^{n}\left(1+\frac{z}{(2j-1)^2}\right)x^{2n+1},
\end{equation}
\begin{equation}\label{eq:even-kernel-def}
 \EE(z,x)
 :=\sum_{n=1}^{\infty}
 \frac{4^n}{n^2\binom{2n}{n}}
 \prod_{j=1}^{n-1}\left(1+\frac{z}{j^2}\right)x^{2n}.
\end{equation}
Then
\begin{equation}\label{eq:odd-kernel-closed}
      \OO(z,x)=\frac{\sinh(\sqrt z\,\arcsin x)}{\sqrt z},
\end{equation}
\begin{equation}\label{eq:even-kernel-closed}
      \EE(z,x)=\frac{\cosh(2\sqrt z\,\arcsin x)-1}{z}.
\end{equation}
The right sides are understood by their power series in \(z\), and hence are entire functions of \(z\).  The series converge normally for \(z\) in compact subsets of \(\C\) and \(|x|\le r<1\).
\end{theorem}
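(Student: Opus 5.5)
Our plan is to identify each kernel series with a Gauss hypergeometric function, apply Lemma \ref{lem:gauss} with a suitable choice of the parameter \(a\), and then integrate in \(x\).

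\textbf{Odd kernel.} The product telescopes into Pochhammer symbols. Factor \((2j-1)^2+z=(2j-1+i\sqrt z)(2j-1-i\sqrt z)\) and put \(a=\tfrac12+\tfrac{i\sqrt z}{2}\), so that \(1-a=\tfrac12-\tfrac{i\sqrt z}{2}\). Then
\[
\prod_{j=1}^n\Bigl(1+\frac{z}{(2j-1)^2}\Bigr)=\frac{(a)_n(1-a)_n}{(1/2)_n^2},
\]
using \(\prod_{j=1}^n(2j-1)=2^n(1/2)_n\). Together with \(\binom{2n}{n}/4^n=(1/2)_n/n!\), the coefficient of \(x^{2n}\) in \(\partial_x\OO\) becomes \((a)_n(1-a)_n/((1/2)_n n!)\). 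Thus
\[
\partial_x\OO={}_2F_1(a,1-a;1/2;x^2).
\]
Now \eqref{eq:gauss-one} gives
\[
\partial_x\OO=\frac{\cos(i\sqrt z\arcsin x)}{\sqrt{1-x^2}}=\frac{\cosh(\sqrt z\arcsin x)}{\sqrt{1-x^2}}.
\]
Integrating from \(0\), using \(\OO(z,0)=0\), yields \eqref{eq:odd-kernel-closed}. Since both sides depend only on \(z\) through even functions of \(\sqrt z\), the choice of square root is irrelevant.

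\textbf{Even kernel.} We differentiate instead of integrating once. Write \(\prod_{j=1}^{n-1}(1+z/j^2)=(1+b)_{n-1}(1-b)_{n-1}/((n-1)!)^2\) with \(b=i\sqrt z\). Also \(4^n/\binom{2n}{n}=n!/(1/2)_n\). Then
\[
\frac{1}{2x}\,\partial_x\EE=\sum_{n\ge1}\frac{n!}{n(1/2)_n}\,\frac{(1+b)_{n-1}(1-b)_{n-1}}{((n-1)!)^2}\,x^{2n-2}.
\]
Reindexing with \(n=k+1\) and \((1/2)_{k+1}=\tfrac12(3/2)_k\), we get
\[
\partial_x\EE=4x\,{}_2F_1(1+b,1-b;3/2;x^2).
\]
Applying \eqref{eq:gauss-two} with \(a=b\) gives
\[
\partial_x\EE=\frac{2\sin(2b\arcsin x)}{b\sqrt{1-x^2}}=\frac{2\sinh(2\sqrt z\arcsin x)}{\sqrt z\sqrt{1-x^2}}.
\]
Integrating, with \(\EE(z,0)=0\), gives \((\cosh(2\sqrt z\arcsin x)-1)/z\). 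At \(z=0\) we use the continuity convention of Lemma \ref{lem:gauss}. Alternatively, both sides are entire in \(z\), so the identity extends from \(z\ne0\).

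\textbf{Convergence.} For \(|z|\le R\) we bound
\[
\prod_j\bigl|1+z/(2j-1)^2\bigr|\le\prod_j(1+R/j^2)\le C_R,
\]
uniformly in \(n\). The coefficients are then \(O(1)\) for \(\OO\) and \(O(4^n n^{-2}\binom{2n}{n}^{-1})=O(n^{-3/2})\) for \(\EE\). Hence both series are dominated by \(\sum C r^{2n}\) when \(|x|\le r<1\), which gives normal convergence. This also justifies termwise differentiation in \(x\) and shows that each side is entire in \(z\). Expanding the right sides as power series in \(z\) is legitimate because \(\sinh(w)/w\) and \((\cosh 2w-1)/w^2\) are entire in \(w^2\).

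\textbf{Main obstacle.} The delicate part is the parameter bookkeeping in the Pochhammer conversions. This means checking the factor \(2^n\), the shift \(n\mapsto k+1\) for the \(3/2\) parameter, and the constant \(4x\), so that the normalizations match exactly. Everything else follows directly from Lemma \ref{lem:gauss}. We would verify the constants against the \(z=0\) case, where \(\OO=\arcsin x\) and \(\EE=2\arcsin^2x\), the classical series.
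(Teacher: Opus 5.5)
Your proposal is correct and follows the paper's proof essentially step for step. Both arguments rewrite the products as Pochhammer ratios, differentiate termwise to reach the ${}_2F_1$ evaluations of Lemma~\ref{lem:gauss} with parameters $\tfrac12\pm\tfrac{i\sqrt z}{2}$ and $1\pm i\sqrt z$, and integrate via $u=\arcsin t$. The only differences are cosmetic: the paper first writes $\EE$ as $2x^2\,{}_3F_2$ before differentiating, and it defers normal convergence to Lemma~\ref{lem:product-bounds}, where your cruder geometric majorant works equally well.
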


\begin{proof}
Let \(\alpha=\sqrt z\).  For the odd kernel,
\begin{equation}\label{eq:P-pochhammer}
 \prod_{j=1}^{n}\left(1+\frac{z}{(2j-1)^2}\right)
 =\frac{(1/2+i\alpha/2)_n(1/2-i\alpha/2)_n}{(1/2)_n^2}.
\end{equation}
Since \(\binom{2n}{n}/4^n=(1/2)_n/n!\), termwise differentiation gives
\[
 \frac{\partial}{\partial x}\OO(z,x)
 ={}_2F_1\left(\begin{matrix}1/2+i\alpha/2,1/2-i\alpha/2\\[2pt]1/2\end{matrix};x^2\right).
\]
By \eqref{eq:gauss-one},
\[
 \frac{\partial}{\partial x}\OO(z,x)
 =\frac{\cosh(\alpha\arcsin x)}{\sqrt{1-x^2}}.
\]
Using \(\OO(z,0)=0\) and the substitution \(u=\arcsin t\),
\[
 \OO(z,x)=\int_0^x\frac{\cosh(\alpha\arcsin t)}{\sqrt{1-t^2}}\dd t
 =\int_0^{\arcsin x}\cosh(\alpha u)\dd u
 =\frac{\sinh(\alpha\arcsin x)}{\alpha}.
\]

For the even kernel, put \(n=k+1\).  Then
\begin{equation}\label{eq:Q-pochhammer}
 \prod_{j=1}^{k}\left(1+\frac{z}{j^2}\right)
 =\frac{(1+i\alpha)_k(1-i\alpha)_k}{(k!)^2}.
\end{equation}
A direct simplification gives
\[
 \frac{4^{k+1}}{(k+1)^2\binom{2k+2}{k+1}}
 \frac{(1+i\alpha)_k(1-i\alpha)_k}{(k!)^2}
 =2\frac{(1+i\alpha)_k(1-i\alpha)_k(1)_k}
 {(3/2)_k(2)_k k!}.
\]
Hence
\[
 \EE(z,x)=2x^2
 {}_3F_2\left(\begin{matrix}1+i\alpha,1-i\alpha,1\\[2pt]3/2,2\end{matrix};x^2\right).
\]
Differentiation gives
\[
 \frac{\partial}{\partial x}\EE(z,x)
 =4x
 {}_2F_1\left(\begin{matrix}1+i\alpha,1-i\alpha\\[2pt]3/2\end{matrix};x^2\right).
\]
Using \eqref{eq:gauss-two} with \(a=i\alpha\),
\[
 \frac{\partial}{\partial x}\EE(z,x)
 =\frac{2\sinh(2\alpha\arcsin x)}{\alpha\sqrt{1-x^2}}.
\]
Because \(\EE(z,0)=0\),
\[
 \EE(z,x)=\int_0^{\arcsin x}\frac{2\sinh(2\alpha u)}{\alpha}\dd u
 =\frac{\cosh(2\alpha\arcsin x)-1}{\alpha^2}.
\]
The normal convergence assertion follows from Lemma \ref{lem:product-bounds} below.
\end{proof}

Extracting coefficients of \(z\) gives the standard arcsine-power coefficient identities in the present notation.

\begin{corollary}\label{cor:coefficient-identities}
For \(|x|<1\) and \(m\ge0\),
\begin{equation}\label{eq:odd-coeff-identity}
 \sum_{n=0}^{\infty}
 \frac{\binom{2n}{n}}{4^n(2n+1)}
 t_n(\{2\}^m)x^{2n+1}
 =\frac{(\arcsin x)^{2m+1}}{(2m+1)!}.
\end{equation}
For \(|x|<1\) and \(m\ge1\),
\begin{equation}\label{eq:even-coeff-identity}
 \sum_{n=1}^{\infty}
 \frac{4^n}{n^2\binom{2n}{n}}
 \zeta_{n-1}(\{2\}^{m-1})x^{2n}
 =\frac{2^{2m}(\arcsin x)^{2m}}{(2m)!}.
\end{equation}
The endpoint values used later are Abel limits and are justified in Lemma \ref{lem:boundary-convergence}.
\end{corollary}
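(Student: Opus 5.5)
The plan is to read off both identities as coefficients of \(z^m\) on the two sides of Theorem \ref{thm:kernels}, using Lemma \ref{lem:finite-products} to identify the coefficients of the products.

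\emph{Coefficients of the series side.} By \eqref{eq:t-product}, the product \(\prod_{j=1}^{n}(1+z/(2j-1)^2)\) is a polynomial in \(z\). Its coefficient of \(z^m\) is \(t_n(\{2\}^m)\). Likewise, by \eqref{eq:zeta-product}, the coefficient of \(z^{m-1}\) in \(\prod_{j=1}^{n-1}(1+z/j^2)\) is \(\zeta_{n-1}(\{2\}^{m-1})\).

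\emph{Coefficients of the closed-form side.} Expanding the right sides of \eqref{eq:odd-kernel-closed} and \eqref{eq:even-kernel-closed} as power series in \(z\) gives
\[
\frac{\sinh(\sqrt z\,\arcsin x)}{\sqrt z}=\sum_{m\ge0}\frac{z^m(\arcsin x)^{2m+1}}{(2m+1)!},
\qquad
\frac{\cosh(2\sqrt z\,\arcsin x)-1}{z}=\sum_{m\ge1}\frac{z^{m-1}2^{2m}(\arcsin x)^{2m}}{(2m)!}.
\]
Both expansions contain only integer powers of \(z\), so the choice of \(\sqrt z\) is irrelevant, consistent with the theorem's convention.

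\emph{Justifying the coefficient comparison.} This is the only real step. Fix \(|x|<1\). Theorem \ref{thm:kernels} states that the series \eqref{eq:odd-kernel-def} and \eqref{eq:even-kernel-def} converge normally for \(z\) in compact subsets of \(\C\). Each term is a polynomial in \(z\), so by Weierstrass's theorem the sums are entire in \(z\). Moreover, their Taylor coefficients may be computed termwise, for instance by Cauchy's integral formula on a circle \(|z|=\rho\), where uniform convergence permits interchanging the sum and the integral. Hence the coefficient of \(z^m\) in \(\OO(z,x)\) equals
\[
\sum_{n\ge0}\frac{\binom{2n}{n}}{4^n(2n+1)}\,t_n(\{2\}^m)\,x^{2n+1},
\]
and the analogous statement holds for \(\EE(z,x)\) with the coefficient of \(z^{m-1}\).

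\emph{Conclusion.} Since the two sides of each identity in Theorem \ref{thm:kernels} are equal as entire functions of \(z\), their Taylor coefficients agree. This yields \eqref{eq:odd-coeff-identity} for \(m\ge0\) and \eqref{eq:even-coeff-identity} for \(m\ge1\).

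The only point needing care is this interchange, and it rests entirely on the normal convergence asserted in the theorem (via Lemma \ref{lem:product-bounds}). The endpoint remark about Abel limits is deferred to Lemma \ref{lem:boundary-convergence} and is not part of this proof.
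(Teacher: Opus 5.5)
Your proposal is correct and follows the paper's own proof. Both extract the coefficient of \(z^m\) (respectively \(z^{m-1}\)) from the two sides of Theorem \ref{thm:kernels}, identify the product coefficients via Lemma \ref{lem:finite-products}, and expand \(\sinh\) and \(\cosh\) in powers of \(z\). Your justification of termwise coefficient extraction, via normal convergence and Cauchy's formula, is a detail the paper leaves implicit.
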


\begin{proof}
The coefficient of \(z^m\) in the product in \eqref{eq:odd-kernel-def} is \(t_n(\{2\}^m)\), while
\[
 \frac{\sinh(\sqrt z\,\arcsin x)}{\sqrt z}
 =\sum_{m=0}^{\infty}\frac{(\arcsin x)^{2m+1}}{(2m+1)!}z^m.
\]
This proves \eqref{eq:odd-coeff-identity}.  The coefficient of \(z^{m-1}\) in the product in \eqref{eq:even-kernel-def} is \(\zeta_{n-1}(\{2\}^{m-1})\), while
\[
 \frac{\cosh(2\sqrt z\,\arcsin x)-1}{z}
 =\sum_{m=1}^{\infty}\frac{2^{2m}(\arcsin x)^{2m}}{(2m)!}z^{m-1}.
\]
This proves \eqref{eq:even-coeff-identity}.
\end{proof}

Putting \(x=1\) gives the unfiltered master sums
\begin{equation}\label{eq:pi-odd-master}
 \pi^{2m+1}=2^{2m+1}(2m+1)!
 \sum_{n=0}^{\infty}
 \frac{\binom{2n}{n}}{4^n(2n+1)}t_n(\{2\}^m),
\end{equation}
valid for \(m\ge0\), and
\begin{equation}\label{eq:pi-even-master}
 \pi^{2m}=(2m)!
 \sum_{n=1}^{\infty}
 \frac{4^n}{n^2\binom{2n}{n}}
 \zeta_{n-1}(\{2\}^{m-1}),
\end{equation}
valid for \(m\ge1\).  These are the unfiltered base layer; the filtered identities below are the main focus.

\section{Algebra of the kernels}\label{sec:algebra}

Set
\begin{equation}\label{eq:PQ-def}
        P_n(z):=\prod_{j=1}^{n}\left(1+\frac{z}{(2j-1)^2}\right),
        \qquad
        Q_N(z):=\prod_{j=1}^{N-1}\left(1+\frac{z}{j^2}\right).
\end{equation}
Thus
\begin{equation}\label{eq:PQ-coeffs}
        P_n(z)=\sum_{m\ge0}t_n(\{2\}^m)z^m,
        \qquad
        Q_N(z)=\sum_{m\ge0}\zeta_{N-1}(\{2\}^m)z^m.
\end{equation}

\begin{theorem}\label{thm:square-law}
For \(|x|<1\),
\begin{equation}\label{eq:square-law}
        \EE(z,x)=2\OO(z,x)^2.
\end{equation}
Consequently, for every integer \(N\ge1\),
\begin{multline}\label{eq:finite-product-convolution}
 \sum_{a=0}^{N-1}
 \frac{\binom{2a}{a}\binom{2N-2a-2}{N-a-1}}
 {4^{N-1}(2a+1)(2N-2a-1)}
 P_a(z)P_{N-1-a}(z) \\
 =
 \frac{4^N}{2N^2\binom{2N}{N}}Q_N(z).
\end{multline}
Equivalently, for every \(m\ge0\),
\begin{multline}\label{eq:harmonic-convolution}
 \sum_{a=0}^{N-1}
 \frac{\binom{2a}{a}\binom{2N-2a-2}{N-a-1}}
 {4^{N-1}(2a+1)(2N-2a-1)}
 \sum_{p=0}^{m}t_a(\{2\}^p)t_{N-1-a}(\{2\}^{m-p}) \\
 =
 \frac{4^N}{2N^2\binom{2N}{N}}\zeta_{N-1}(\{2\}^{m}).
\end{multline}
\end{theorem}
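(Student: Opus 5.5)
The plan is to read the square law directly off the closed forms in Theorem \ref{thm:kernels} and then compare power-series coefficients in \(x\).

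\textbf{Step 1: the square law.} Put \(\alpha=\sqrt z\) and \(\theta=\arcsin x\). By \eqref{eq:odd-kernel-closed} and \eqref{eq:even-kernel-closed},
\[
2\OO(z,x)^2=\frac{2\sinh^2(\alpha\theta)}{\alpha^2}=\frac{\cosh(2\alpha\theta)-1}{\alpha^2}=\EE(z,x),
\]
using \(\cosh 2w-1=2\sinh^2 w\). Both sides are even in \(\alpha\), hence entire in \(z\), so there is no branch ambiguity. The identity first holds for \(z\neq0\) and then at \(z=0\) by continuity, or equivalently as an identity of power series in \(z\). This proves \eqref{eq:square-law} for \(|x|<1\).

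\textbf{Step 2: the product convolution \eqref{eq:finite-product-convolution}.} Write
\[
\OO(z,x)=\sum_{a\ge0}c_aP_a(z)x^{2a+1},\qquad c_a=\binom{2a}{a}/(4^a(2a+1)).
\]
By the normal convergence in Theorem \ref{thm:kernels}, the Cauchy product of this series with itself converges absolutely for \(|x|<1\). Its terms are
\[
\OO^2=\sum_{N\ge1}x^{2N}\sum_{a+b=N-1}c_ac_bP_a(z)P_b(z).
\]
The exponent is \((2a+1)+(2b+1)=2N\) with \(b=N-1-a\). Now compare with the coefficient of \(x^{2N}\) in \(\EE/2\), which is
\[
\frac{4^N}{2N^2\binom{2N}{N}}Q_N(z).
\]
Uniqueness of power-series coefficients in \(x\), for each fixed \(z\), gives the identity. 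Before finishing I will check the normalisation: \(c_ac_b\) has denominator \(4^{a+b}=4^{N-1}\), which matches the displayed left side. The one point that needs care is that the rearrangement is legitimate. This is immediate from absolute convergence for \(|x|<1\), which follows from the product bounds the paper cites as Lemma \ref{lem:product-bounds}.

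\textbf{Step 3: the harmonic form \eqref{eq:harmonic-convolution}.} Both sides of \eqref{eq:finite-product-convolution} are polynomials in \(z\). We expand \(P_a\), \(P_{N-1-a}\) and \(Q_N\) using \eqref{eq:PQ-coeffs}, which rests on Lemma \ref{lem:finite-products}. The coefficient of \(z^m\) on the left is then the Cauchy convolution \(\sum_{p=0}^m t_a(\{2\}^p)\,t_{N-1-a}(\{2\}^{m-p})\). On the right it is \(\zeta_{N-1}(\{2\}^m)\).

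I do not expect a real obstacle. The only things to watch are the index bookkeeping, namely the shift \(n=N\) in \(\EE\) against \(a+b=N-1\) in the product, and the entire-in-\(z\) justification at \(z=0\).
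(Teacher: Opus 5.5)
Your proposal is correct and follows the paper's proof essentially step for step. You derive the square law from the closed forms of Theorem~\ref{thm:kernels} via $\cosh 2w-1=2\sinh^2 w$, compare coefficients of $x^{2N}$ (your normalization check $4^{a+b}=4^{N-1}$, $2b+1=2N-2a-1$ is right), and then extract the coefficient of $z^m$ using \eqref{eq:PQ-coeffs}; your explicit remarks on absolute convergence and the case $z=0$ just make precise what the paper leaves implicit.
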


\begin{proof}
Let \(\theta=\arcsin x\).  From Theorem \ref{thm:kernels},
\[
        2\OO(z,x)^2
        =2\frac{\sinh^2(\sqrt z\,\theta)}{z}
        =\frac{\cosh(2\sqrt z\,\theta)-1}{z}
        =\EE(z,x).
\]
Comparing the coefficient of \(x^{2N}\) in \(2\OO(z,x)^2=\EE(z,x)\) gives
\[
 2\sum_{a=0}^{N-1}
 \frac{\binom{2a}{a}}{4^a(2a+1)}
 \frac{\binom{2N-2a-2}{N-a-1}}{4^{N-1-a}(2N-2a-1)}
 P_a(z)P_{N-1-a}(z)
 =\frac{4^N}{N^2\binom{2N}{N}}Q_N(z).
\]
Division by \(2\) gives \eqref{eq:finite-product-convolution}.  Extracting the coefficient of \(z^m\) and using \eqref{eq:PQ-coeffs} gives \eqref{eq:harmonic-convolution}.
\end{proof}

For example, the case \(m=0\) gives the binomial convolution
\begin{equation}\label{eq:binomial-convolution-example}
 \sum_{a=0}^{N-1}
 \frac{\binom{2a}{a}\binom{2N-2a-2}{N-a-1}}
 {4^{N-1}(2a+1)(2N-2a-1)}
 =\frac{4^N}{2N^2\binom{2N}{N}}.
\end{equation}
The case \(m=1\) converts a convolution of odd-square harmonic sums into an ordinary-square harmonic sum.  Since \(t_a(\{2\})=O_a^{(2)}\), it gives explicitly
\begin{multline}\label{eq:m1-convolution-example}
 \sum_{a=0}^{N-1}
 \frac{\binom{2a}{a}\binom{2N-2a-2}{N-a-1}}
 {4^{N-1}(2a+1)(2N-2a-1)}
 \left(O_a^{(2)}+O_{N-1-a}^{(2)}\right) \\
 =\frac{4^N}{2N^2\binom{2N}{N}}H_{N-1}^{(2)}.
\end{multline}
This formula is a useful low-weight illustration of the square law; the identity \(\EE=2\OO^2\) itself is elementary, but its coefficient comparison bridges the two finite harmonic alphabets.

\subsection*{Additional algebraic consequences}

The same hyperbolic computation gives the following two supplementary identities.  They are included for completeness and for checking purposes, but the main proof chain resumes with the filter theorems in Section \ref{sec:filters}.

\begin{proposition}\label{thm:product-law}
Let \(\alpha,\beta\in\C\), and put \(z=\alpha^2\), \(w=\beta^2\).  If \(\alpha\beta\ne0\), then
\begin{multline}\label{eq:product-law}
 \OO(z,x)\OO(w,x) \\
 =\frac{(\alpha+\beta)^2\EE((\alpha+\beta)^2/4,x)
 -(\alpha-\beta)^2\EE((\alpha-\beta)^2/4,x)}{8\alpha\beta}.
\end{multline}
The limiting cases \(\alpha\beta=0\) are obtained by continuity.  The specialization \(\alpha=\beta\) recovers the square law.
\end{proposition}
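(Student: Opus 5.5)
We use the closed forms of Theorem \ref{thm:kernels} together with the product-to-sum identity for hyperbolic sines. Put \(\theta=\arcsin x\). The kernels are entire in \(z\), so any branch of \(\sqrt z\) may be used. In particular, with \(z=\alpha^2\) we may write \(\OO(z,x)=\sinh(\alpha\theta)/\alpha\); the right side is even in \(\alpha\), so the choice of square root does not matter. In the same way, \(\EE(\gamma^2/4,x)=(\cosh(\gamma\theta)-1)/(\gamma^2/4)\) for any \(\gamma\ne0\), so
\[
 \gamma^2\,\EE(\gamma^2/4,x)=4\bigl(\cosh(\gamma\theta)-1\bigr),
\]
and this expression is also correct (equal to \(0\)) at \(\gamma=0\), by continuity in \(z\).

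With \(\alpha\beta\ne0\), the left side of \eqref{eq:product-law} equals \(\sinh(\alpha\theta)\sinh(\beta\theta)/(\alpha\beta)\). By the identity
\[
2\sinh A\sinh B=\cosh(A+B)-\cosh(A-B),
\]
this becomes
\[
\frac{\cosh((\alpha+\beta)\theta)-\cosh((\alpha-\beta)\theta)}{2\alpha\beta}.
\]
Apply the displayed formula with \(\gamma=\alpha\pm\beta\). The \(-1\) terms cancel, since \(4(\cosh((\alpha+\beta)\theta)-1)-4(\cosh((\alpha-\beta)\theta)-1)\) equals four times the numerator above. Dividing by \(8\alpha\beta\) then gives exactly the right side of \eqref{eq:product-law}.

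The only point needing care is the case where \(\alpha+\beta\) or \(\alpha-\beta\) vanishes, because \(\EE\) is then evaluated at \(z=0\). There the factor \(\gamma^2\) multiplies the finite value \(\EE(0,x)=2\theta^2\) and gives \(0\), which agrees with \(4(\cosh 0-1)=0\). So the identity holds for all \(\alpha\beta\ne0\).

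For \(\alpha\beta=0\), both sides are continuous in \((\alpha,\beta)\). On the left this is because \(\OO\) is entire in \(z\). On the right, the numerator is an entire function vanishing on \(\alpha\beta=0\), by the hyperbolic form above, so the quotient extends continuously.

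Finally, take \(\alpha=\beta\). The second term drops out and the right side becomes
\[
\frac{4\alpha^2\EE(\alpha^2,x)}{8\alpha^2}=\tfrac12\EE(z,x),
\]
while the left side is \(\OO(z,x)^2\). This recovers \eqref{eq:square-law}.
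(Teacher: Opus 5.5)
Your proof is correct and follows the paper's argument: you apply the product-to-sum formula to \(\sinh(\alpha\theta)\sinh(\beta\theta)/(\alpha\beta)\) and substitute \(\gamma^2\EE(\gamma^2/4,x)=4(\cosh(\gamma\theta)-1)\) at \(\gamma=\alpha\pm\beta\). Your additional remarks on the choice of \(\sqrt z\), the degenerate cases \(\alpha=\pm\beta\), and continuity at \(\alpha\beta=0\) are sound and spell out points the paper leaves implicit.
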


\begin{proof}
With \(\theta=\arcsin x\),
\[
 \OO(\alpha^2,x)\OO(\beta^2,x)
 =\frac{\sinh(\alpha\theta)\sinh(\beta\theta)}{\alpha\beta}
 =\frac{\cosh((\alpha+\beta)\theta)-\cosh((\alpha-\beta)\theta)}{2\alpha\beta}.
\]
Since
\[
        \cosh(\gamma\theta)-1=\frac{\gamma^2}{4}\EE(\gamma^2/4,x),
\]
substitution of \(\gamma=\alpha+\beta\) and \(\gamma=\alpha-\beta\) proves \eqref{eq:product-law}.
\end{proof}

\begin{proposition}\label{thm:spectral-truncation}
For every integer \(s\ge1\) and \(|x|<1\),
\begin{equation}\label{eq:odd-spectral}
\sum_{n=0}^{s-1}
\frac{\binom{2n}{n}}{4^n(2n+1)}
\prod_{j=1}^{n}\left(1-\frac{(2s-1)^2}{(2j-1)^2}\right)x^{2n+1}
=
\frac{\sin((2s-1)\arcsin x)}{2s-1}.
\end{equation}
For every integer \(s\ge1\) and \(|x|<1\),
\begin{equation}\label{eq:even-spectral}
\sum_{n=1}^{s}
\frac{4^n}{n^2\binom{2n}{n}}
\prod_{j=1}^{n-1}\left(1-\frac{s^2}{j^2}\right)x^{2n}
=
\frac{1-\cos(2s\arcsin x)}{s^2}.
\end{equation}
Both identities extend as polynomial identities in \(x\).
\end{proposition}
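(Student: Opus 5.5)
The plan is to specialize Theorem \ref{thm:kernels} at negative square values of \(z\), where the products in \eqref{eq:odd-kernel-def} and \eqref{eq:even-kernel-def} acquire a vanishing factor and the series truncate.

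\textbf{Odd identity.} Take \(z=-(2s-1)^2\) and choose \(\sqrt z=i(2s-1)\); the closed form is even in \(\sqrt z\), so the branch does not matter. Then
\[
\frac{\sinh(i(2s-1)\arcsin x)}{i(2s-1)}=\frac{\sin((2s-1)\arcsin x)}{2s-1}.
\]
On the series side, for \(n\ge s\) the product \(\prod_{j=1}^{n}(1-(2s-1)^2/(2j-1)^2)\) contains the factor \(j=s\), which is zero. So only the terms \(0\le n\le s-1\) survive, and this is exactly \eqref{eq:odd-spectral}.

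\textbf{Even identity.} Take \(z=-s^2\) and \(\sqrt z=is\). Then
\[
\frac{\cosh(2is\arcsin x)-1}{-s^2}=\frac{1-\cos(2s\arcsin x)}{s^2}.
\]
The product \(\prod_{j=1}^{n-1}(1-s^2/j^2)\) vanishes once \(n-1\ge s\), because of the factor \(j=s\). So the sum runs over \(1\le n\le s\), giving \eqref{eq:even-spectral}.

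\textbf{Justification and extension.} Theorem \ref{thm:kernels} is stated for all \(z\in\C\), with the right sides read as entire power series in \(z\). Specializing to negative real \(z\) is therefore legitimate, and both sides agree pointwise for \(|x|<1\).

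For the polynomial extension, note that the left sides are polynomials in \(x\). The right sides are also polynomials: \(\sin((2s-1)\theta)\) is an odd polynomial in \(\sin\theta\) (Chebyshev), and \(1-\cos(2s\theta)=1-T_{2s}(\cos\theta)\) is a polynomial in \(\sin^2\theta\), since \(T_{2s}\) is even. Two polynomials that agree on \((-1,1)\) agree identically.

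There is no real obstacle here. The only point needing care is the branch of \(\sqrt z\), and evenness of the kernels in \(\sqrt z\) resolves it.
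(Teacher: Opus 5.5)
Your proposal is correct and follows the paper's proof essentially step for step: specialize Theorem~\ref{thm:kernels} at \(z=-(2s-1)^2\) and \(z=-s^2\), use the vanishing factor \(j=s\) to truncate the series, rewrite \(\sinh\) and \(\cosh\) at imaginary argument as \(\sin\) and \(\cos\), and extend from \(|x|<1\) by comparing polynomials. Your Chebyshev argument that the right sides are polynomials, and your remark that the closed forms are even in \(\sqrt z\) (so the branch is irrelevant), are useful clarifications that the paper leaves implicit.
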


\begin{proof}
In \(P_n(z)\), the value \(z=-(2s-1)^2\) makes the factor with \(j=s\) vanish.  Thus the odd series truncates at \(n=s-1\).  The closed form in \eqref{eq:odd-kernel-closed} becomes
\[
      \frac{\sinh(i(2s-1)\arcsin x)}{i(2s-1)}
      =\frac{\sin((2s-1)\arcsin x)}{2s-1},
\]
which proves \eqref{eq:odd-spectral}.  Similarly, \(Q_n(-s^2)\) vanishes when \(n\ge s+1\).  The even closed form becomes
\[
      \frac{\cos(2s\arcsin x)-1}{-s^2}
      =\frac{1-\cos(2s\arcsin x)}{s^2}.
\]
This proves \eqref{eq:even-spectral}.  The identities are first obtained for \(|x|<1\); since both sides are polynomials in \(x\), they extend to all complex \(x\).
\end{proof}

\section{Branch conventions and analytic operations}\label{sec:branches}

We use the principal logarithm \(\Log\), the principal square root, and the principal inverse sine
\begin{equation}\label{eq:principal-arcsin}
        \arcsin w=-i\Log\left(iw+\sqrt{1-w^2}\right).
\end{equation}
The principal argument is fixed by
\begin{equation}\label{eq:Arg-range}
        -\pi<\Arg w\le\pi.
\end{equation}
If \(\omega=e^{2\pi i/q}\), define
\begin{equation}\label{eq:eta-definition}
        \eta_\ell:=\exp\left(\frac{i}{2}\Arg(\omega^\ell)\right),
        \qquad 0\le \ell<q.
\end{equation}
Equivalently,
\begin{equation}\label{eq:eta-piecewise}
\eta_\ell=
\begin{cases}
 e^{\pi i\ell/q},&0\le \ell\le q/2,\\[2pt]
 e^{\pi i(\ell/q-1)},&q/2<\ell<q.
\end{cases}
\end{equation}
When \(q\) is even and \(\ell=q/2\), this gives \(\eta_\ell=i\), so \(\sqrt{-1}=i\). All roots, logarithms, and inverse sines in the later sections follow these principal-branch conventions unless explicitly stated otherwise.

\begin{lemma}\label{lem:product-bounds}
Let \(K\subset\C\) be compact.  Then \(P_n(z)\) and \(Q_n(z)\), defined in \eqref{eq:PQ-def}, are bounded uniformly for \(z\in K\) and all \(n\).  Consequently, the series defining \(\OO(z,x)\), \(\EE(z,x)\), and their first derivatives with respect to \(x\) converge normally for \(z\in K\) and \(|x|\le r<1\).
\end{lemma}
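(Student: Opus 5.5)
The plan is to bound the products by an exponential of a convergent series, uniformly in \(n\), and then compare the kernels termwise with geometric series. Fix \(K\) compact and put \(M:=\sup_{z\in K}|z|<\infty\). For each factor I use \(|1+w|\le 1+|w|\le e^{|w|}\), which gives
\[
 |P_n(z)|\le\prod_{j=1}^{n}\Bigl(1+\frac{M}{(2j-1)^2}\Bigr)\le\exp\Bigl(M\sum_{j\ge1}\frac1{(2j-1)^2}\Bigr)=e^{M\pi^2/8},
\]
and similarly
\[
 |Q_n(z)|\le\exp\Bigl(M\sum_{j\ge1}\frac1{j^2}\Bigr)=e^{M\pi^2/6},
\]
for all \(n\) and all \(z\in K\). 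This settles the first assertion. Since \(\zeta(2)\) converges, I do not expect any obstacle here.

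For normal convergence I need the elementary coefficient bounds \(\binom{2n}{n}/4^n\le1\) and \(4^n/\binom{2n}{n}\le 2\sqrt n\), or more crudely \(\le 2n+1\); the latter follows from \(\binom{2n}{n}\ge 4^n/(2n+1)\), since \(\binom{2n}{n}\) is the largest of the \(2n+1\) binomial coefficients \(\binom{2n}{k}\). On \(K\times\{|x|\le r\}\) the \(n\)-th term of \(\OO\) is then bounded by \(e^{M\pi^2/8}\,r^{2n+1}/(2n+1)\), and the \(n\)-th term of \(\EE\) by \(e^{M\pi^2/6}(2n+1)r^{2n}/n^2\). Both are summable because \(r<1\), so the Weierstrass M-test gives normal convergence.

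For the \(x\)-derivatives, differentiating termwise multiplies the \(n\)-th term by \(2n+1\) or \(2n\) and lowers the power of \(r\) by one. The majorants \(e^{M\pi^2/8}r^{2n}\) and \(e^{M\pi^2/6}\,2(2n+1)r^{2n-1}/n\) are still summable, so the differentiated series also converge normally. This in turn justifies the termwise differentiation used in the proof of Theorem \ref{thm:kernels}, because a normally convergent series of holomorphic functions may be differentiated term by term (Weierstrass). The only mildly delicate point in the whole argument is the central-binomial growth estimate for \(\EE\), and the crude bound above already suffices.
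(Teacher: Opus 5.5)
Your proof is correct and follows essentially the paper's argument: you bound \(P_n\) and \(Q_n\) uniformly by the convergent products (in exponential form, with explicit constants \(e^{M\pi^2/8}\) and \(e^{M\pi^2/6}\)) and then apply the M-test with a geometric majorant for the series and their \(x\)-derivatives. The only difference is that you use cruder elementary central-binomial bounds where the paper uses Stirling's \(O(n^{\mp1/2})\) estimates; this is harmless because the factor \(r^{2n}\) absorbs any polynomial growth when \(r<1\).
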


\begin{proof}
Choose \(R\) with \(|z|\le R\) on \(K\).  Then
\[
 |P_n(z)|\le\prod_{j=1}^{\infty}\left(1+\frac{R}{(2j-1)^2}\right),
 \qquad
 |Q_n(z)|\le\prod_{j=1}^{\infty}\left(1+\frac{R}{j^2}\right).
\]
Both products converge because \(\sum j^{-2}\) converges.  Stirling's formula gives
\[
       \frac{\binom{2n}{n}}{4^n}=O(n^{-1/2}),
       \qquad
       \frac{4^n}{\binom{2n}{n}}=O(n^{1/2}).
\]
Hence the odd kernel terms are bounded by a constant times \(n^{-3/2}r^{2n+1}\), and the even kernel terms by a constant times \(n^{-3/2}r^{2n}\).  The differentiated series have terms bounded by a constant times \(n^{-1/2}r^{2n}\) in the odd case and \(n^{-1/2}r^{2n-1}\) in the even case.  These bounds are summable for \(r<1\), proving normal convergence.
\end{proof}

For \(m\ge0\), set
\begin{equation}\label{eq:Fm-def}
 F_m(y):=\sum_{n=0}^{\infty}
 \frac{\binom{2n}{n}}{4^n(2n+1)}t_n(\{2\}^m)y^n.
\end{equation}
For \(m\ge1\), set
\begin{equation}\label{eq:Gm-def}
 G_m(y):=\sum_{n=1}^{\infty}
 \frac{4^n}{n^2\binom{2n}{n}}\zeta_{n-1}(\{2\}^{m-1})y^n.
\end{equation}
For \(|y|<1\), Corollary \ref{cor:coefficient-identities} gives
\begin{equation}\label{eq:Fm-closed}
 F_m(y)=\frac{(\arcsin\sqrt y)^{2m+1}}{(2m+1)!\sqrt y},
\end{equation}
\begin{equation}\label{eq:Gm-closed}
 G_m(y)=\frac{2^{2m}(\arcsin\sqrt y)^{2m}}{(2m)!}.
\end{equation}
The quotient in \eqref{eq:Fm-closed} has a removable value at \(y=0\).

\begin{lemma}\label{lem:boundary-convergence}
For fixed \(m\), the series defining \(F_m\) and \(G_m\) converge absolutely on \(|y|=1\).  Their boundary values are Abel limits:
\begin{equation}\label{eq:abel-limits}
       F_m(\xi)=\lim_{\rho\to1^-}F_m(\rho\xi),
       \qquad
       G_m(\xi)=\lim_{\rho\to1^-}G_m(\rho\xi),
       \qquad |\xi|=1.
\end{equation}
\end{lemma}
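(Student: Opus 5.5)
The plan is to prove absolute convergence on \(|y|=1\) from explicit coefficient bounds, and then obtain the Abel limits from Abel's theorem.

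\textbf{Coefficient bounds.} We reuse the estimates from Lemma \ref{lem:product-bounds}. The coefficients \(t_n(\{2\}^m)\) and \(\zeta_{n-1}(\{2\}^{m-1})\) are nonnegative and increase in \(n\). They are dominated by the full products evaluated at \(z=1\), through the coefficient extraction
\[
t_n(\{2\}^m)\le P_n(1),\qquad \zeta_{n-1}(\{2\}^{m-1})\le Q_n(1).
\]
Both right sides are bounded uniformly in \(n\) by the convergent infinite products. So for fixed \(m\) these coefficients are \(O(1)\).

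\textbf{Odd series \(F_m\).} By Stirling, \(\binom{2n}{n}/4^n=O(n^{-1/2})\). The \(n\)th term is therefore \(O(n^{-3/2})\) on \(|y|=1\), which is summable.

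\textbf{Even series \(G_m\).} This is the main obstacle. The crude bound gives \(4^n/(n^2\binom{2n}{n})=O(n^{-3/2})\) because \(4^n/\binom{2n}{n}=O(n^{1/2})\). Dividing by \(n^2\) yields \(O(n^{-3/2})\) again, so the series converges absolutely. I will double-check the exponent: \(n^{1/2}/n^2=n^{-3/2}\), so \(\sum n^{-3/2}<\infty\), and no subtlety remains.

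\textbf{Abel limits.} Since both series converge absolutely, hence converge, at every \(\xi\) with \(|\xi|=1\), Abel's theorem for power series gives \(\lim_{\rho\to1^-}F_m(\rho\xi)=F_m(\xi)\), and likewise for \(G_m\). Alternatively, the Weierstrass M-test with majorant \(Cn^{-3/2}\) shows the series converge uniformly on the closed disc \(|y|\le1\). The sums are then continuous there, which gives \eqref{eq:abel-limits} directly and even the stronger statement of nontangential continuity.

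\textbf{Closed forms at the boundary.} As a remark, the closed forms \eqref{eq:Fm-closed} and \eqref{eq:Gm-closed} extend to the boundary values, wherever the principal-branch right sides are continuous up to \(|y|=1\). I will note this without proof, since the lemma only asserts the limits. The uniform-convergence route is the one I will write, as it avoids invoking Abel's theorem separately at each \(\xi\).
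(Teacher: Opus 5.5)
Your proposal is correct and matches the paper's argument: the finite repeated sums are bounded in \(n\) (your bound by \(P_n(1)\) and \(Q_n(1)\) is a concrete version of the paper's "elementary symmetric sums over summable sequences" remark), and the central-binomial factors make both coefficient sequences \(O(n^{-3/2})\). Your Weierstrass M-test on the closed disc gives continuity up to \(|y|=1\). This is equivalent to, and slightly stronger than, the paper's appeal to Abel's theorem.
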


\begin{proof}
The finite sums \(t_n(\{2\}^m)\) and \(\zeta_{n-1}(\{2\}^{m-1})\) are bounded in \(n\), since they are elementary symmetric sums over summable positive sequences.  Therefore the coefficient of \(y^n\) in \(F_m\) is \(O(n^{-3/2})\), and the coefficient of \(y^n\) in \(G_m\) is also \(O(n^{-3/2})\).  Both coefficient sequences are absolutely summable.  Abel's theorem then gives \eqref{eq:abel-limits}.
\end{proof}

\begin{lemma}\label{lem:branch-values}
Let \(\omega=e^{2\pi i/q}\) and let \(\eta_\ell\) be defined by \eqref{eq:eta-definition}.  For \(0\le x<1\), the functions
\begin{equation}\label{eq:branch-functions}
        x\mapsto \arcsin(\eta_\ell x),
        \qquad
        x\mapsto (1-\omega^\ell x^2)^{-1/2}
\end{equation}
are interpreted by principal branches and are continuous up to radial limits at \(x=1\).  The boundary values of \(F_m(\omega^\ell)\) and \(G_m(\omega^\ell)\) are obtained from \eqref{eq:Fm-closed} and \eqref{eq:Gm-closed} by substituting \(\sqrt{\omega^\ell}=\eta_\ell\).  In particular,
\begin{equation}\label{eq:arcsin-i}
       \arcsin i=i\arsinh1=i\log(1+\sqrt2).
\end{equation}
Non-real conjugate roots give conjugate boundary values.  The self-conjugate points \(1\) and \(-1\) give real values in \(F_m\) and \(G_m\).
\end{lemma}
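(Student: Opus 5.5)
We treat \(\eta_\ell x\) for \(0\le x<1\) as a point of the closed unit disk and show that the principal-branch functions in \eqref{eq:branch-functions} are continuous on the closed segment \(\{\eta_\ell x:0\le x\le1\}\). The key observation is that the principal \(\arcsin w=-i\Log(iw+\sqrt{1-w^2})\) is analytic off the cuts \((-\infty,-1]\cup[1,\infty)\). By \eqref{eq:eta-piecewise}, \(\eta_\ell\) has argument in \((-\pi/2,\pi/2]\). Hence \(\eta_\ell x\) lies on the real axis only when \(\eta_\ell=1\), and then \(0\le x\le1\) touches the cut only at the endpoint \(w=1\). Continuity there is standard, since \(\arcsin\) extends continuously to \([-1,1]\) with \(\arcsin1=\pi/2\).

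The argument for \((1-\omega^\ell x^2)^{-1/2}\) is similar. We have \(1-\omega^\ell x^2\in(-\infty,0]\) only if \(\omega^\ell x^2\ge1\), which forces \(\omega^\ell=1\) and \(x=1\). So for \(\ell\ne0\) the principal power is continuous up to \(x=1\). For \(\ell=0\) it blows up at \(x=1\), but this function only enters through integrands, and it is not needed for the boundary values of \(F_m,G_m\).

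Next we identify the boundary values. For \(0\le\rho<1\), put \(y=\rho\omega^\ell\). The closed forms \eqref{eq:Fm-closed} and \eqref{eq:Gm-closed} are even in \(\sqrt y\): \(F_m\) involves \((\arcsin\sqrt y)^{2m+1}/\sqrt y\) with \(\arcsin\) odd, and \(G_m\) involves an even power. Hence any choice of square root works. We choose \(\sqrt y=\sqrt\rho\,\eta_\ell\), which is legitimate because \(\eta_\ell^2=\omega^\ell\) (check: \(\exp(i\Arg\omega^\ell)=\omega^\ell\)). We then let \(\rho\to1^-\). Lemma \ref{lem:boundary-convergence} says the series values at \(\omega^\ell\) equal these Abel limits, and the continuity from the first step lets us pass to \(\arcsin(\eta_\ell)\). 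This gives the substitution rule. The main point to watch is that the Abel limit and the closed form must agree along the whole radius. This holds because \eqref{eq:Fm-closed} is an identity of analytic functions on the open disk, so the evenness argument avoids any branch-matching subtlety in the interior.

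For \eqref{eq:arcsin-i}, we compute directly. Since \(i\cdot i+\sqrt{1-i^2}=-1+\sqrt2>0\), we get \(\arcsin i=-i\log(\sqrt2-1)=i\log(1+\sqrt2)=i\arsinh1\).

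For the conjugation statement, we use the fact that the coefficients of \(F_m\) and \(G_m\) are real. Therefore \(F_m(\bar\xi)=\overline{F_m(\xi)}\) on the absolutely convergent boundary series, and \(\omega^{q-\ell}=\overline{\omega^\ell}\). The same fact gives real values at \(\xi=\pm1\). As a check at \(\xi=-1\), \(\eta=i\) gives \(F_m(-1)=(iL)^{2m+1}/((2m+1)!\,i)=(-1)^mL^{2m+1}/(2m+1)!\), which is real, as expected.
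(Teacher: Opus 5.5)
Your proof is correct and follows essentially the paper's route: a radial Abel limit along \(\sqrt{\rho\omega^\ell}=\rho^{1/2}\eta_\ell\) combined with Lemma \ref{lem:boundary-convergence}, a check that the segments avoid the branch cuts, and direct evaluation of \(\arcsin i\) from \eqref{eq:principal-arcsin}. The variations are minor. You justify the square-root choice by evenness of the closed forms rather than by the principal-root convention. You also derive the conjugation and reality statements from the real coefficients of the absolutely convergent boundary series rather than from the conjugation symmetry of the principal branches, which is slightly cleaner.
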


\begin{proof}
For \(0<\rho<1\), the point \(\sqrt{\rho\omega^\ell}\) lies in the unit disk and equals \(\rho^{1/2}\eta_\ell\) by the principal square-root convention.  The power-series inverse sine agrees there with \eqref{eq:principal-arcsin}.  Letting \(\rho\to1^-\), and using Lemma \ref{lem:boundary-convergence}, proves the stated boundary formulas.

The denominator \(1-\omega^\ell x^2\) does not meet the negative real axis for \(0\le x<1\).  At \(x=1\), if \(\omega^\ell=1\), the singularity is the usual integrable square-root singularity.  If \(\omega^\ell\ne1\), the denominator remains nonzero.  Conjugation commutes with the principal inverse sine and principal square root away from the cut; the only self-conjugate roots are \(1\) and \(-1\).  At \(-1\), the principal square root is \(i\), and \eqref{eq:arcsin-i} follows directly from \eqref{eq:principal-arcsin}.  The corresponding values of \(F_m(-1)\) and \(G_m(-1)\) are therefore real.
\end{proof}

\begin{lemma}\label{lem:mellin-operations}
Let \(q\ge1\), \(0\le\ell<q\), and \(z\) range over a compact subset of \(\C\).  The Mellin integrals used below are locally uniformly convergent for \(\Re s>-1\) in the odd-kernel case and for \(\Re s>-2\) in the even-kernel case.  Differentiation of any fixed finite order with respect to \(s\) is allowed on compact subsets of these half-planes.
\end{lemma}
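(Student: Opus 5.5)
The plan is to reduce everything to one domination estimate, valid uniformly on the closed unit disk, that comes straight from the coefficient bounds already proved. The Mellin integrals are introduced only later in the paper, so I will take them to be of the form
\[
 \int_0^1 x^{s-1}\,\OO(z,\eta_\ell x)\dd x,
 \qquad
 \int_0^1 x^{s-1}\,\EE(z,\eta_\ell x)\dd x,
\]
possibly with an extra factor \((\log x)^k\). This is the normalization that makes the thresholds \(\Re s>-1\) and \(\Re s>-2\) the natural ones. Equivalently, they are the corresponding integrals of \(F_m(\omega^\ell x^2)\) and \(G_m(\omega^\ell x^2)\) after extracting coefficients in \(z\).

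\emph{Step 1 (uniform bound on the closed disk).} Fix a compact \(K\subset\C\) and choose \(R\) with \(|z|\le R\) on \(K\). As in the proof of Lemma \ref{lem:product-bounds}, \(|P_n(z)|\) and \(|Q_n(z)|\) are bounded by convergent infinite products independent of \(n\). Combined with \(\binom{2n}{n}/4^n=O(n^{-1/2})\), this shows that the \(n\)th term of \(\OO(z,w)\) is at most \(C n^{-3/2}|w|^{2n+1}\), and the \(n\)th term of \(\EE(z,w)\) is at most \(C n^{-3/2}|w|^{2n}\), for \(|w|\le1\) and \(z\in K\). Summing these bounds gives
\[
 |\OO(z,w)|\le C|w|,
 \qquad
 |\EE(z,w)|\le C|w|^2,
 \qquad |w|\le 1,\ z\in K.
\]
The factor \(|w|\) or \(|w|^2\) is pulled out because the series start at \(w^1\) and \(w^2\) respectively. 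Lemma \ref{lem:boundary-convergence} together with Lemma \ref{lem:branch-values} guarantees that the values at \(|w|=1\), in particular at \(w=\eta_\ell x\) with \(x=1\), are the Abel limits of these same series. So the bound holds for the functions actually appearing in the integrals, including at the endpoint.

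\emph{Step 2 (convergence).} Write \(\sigma=\Re s\) and restrict \(s\) to a compact set on which \(\sigma\ge\sigma_0\). The integrands are then dominated by \(C x^{\sigma_0}|\log x|^k\) in the odd case and \(C x^{\sigma_0+1}|\log x|^k\) in the even case. These are integrable on \((0,1]\) precisely when \(\sigma_0>-1\) and \(\sigma_0>-2\) respectively. The endpoint \(x=1\) causes no trouble, since the bound of Step 1 is uniform up to \(|w|=1\); this is the role of the absolute boundary convergence. Dominated convergence then gives local uniform convergence in \((s,z)\), and the integral is jointly continuous.

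\emph{Step 3 (differentiation in \(s\) and termwise integration).} For fixed \(z\), the integrand is entire in \(s\) and jointly continuous in \((s,x)\). By Step 2 it has a locally uniform integrable majorant, so Morera's theorem (or differentiation under the integral sign, using the majorant with one more \(|\log x|\) factor) shows that the integral is holomorphic in \(s\). Its \(s\)-derivatives of any fixed order are obtained by inserting \((\log x)^k\), which the same majorants cover. Termwise integration follows from Tonelli's theorem, because
\[
 \sum_n C n^{-3/2}\int_0^1 x^{2n+\sigma}|\log x|^k\dd x
 =\sum_n C n^{-3/2}\,\frac{k!}{(2n+1+\sigma)^{k+1}}<\infty .
\]
The same computation, with \(2n+\sigma-1\) in place of \(2n+\sigma\), handles the even kernel.

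The main obstacle is Step 1 at the boundary point \(x=1\) for roots with \(\omega^\ell\neq1\). There the closed forms involve branches, and the argument must not rely on them. I would avoid the branch issue entirely by working only with the power series and its absolute convergence on the closed disk, invoking Lemma \ref{lem:branch-values} afterwards solely to identify the resulting values with the principal-branch expressions.
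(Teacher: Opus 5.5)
There is a genuine gap: the proposal proves the lemma for the wrong integrals. The Mellin integrals used later, in \eqref{eq:unified-A}, \eqref{eq:unified-B} and in Theorems \ref{thm:odd-mellin} and \ref{thm:even-mellin}, are not $\int_0^1x^{s-1}\OO(z,\eta_\ell x)\dd x$ and $\int_0^1x^{s-1}\EE(z,\eta_\ell x)\dd x$. They are $\int_0^1x^s(\cdots)\dd x$ applied to the $x$-derivatives of the kernels,
\[
\frac{\cosh(\sqrt z\,\arcsin(\eta_\ell x))}{\sqrt{1-\omega^\ell x^2}},
\qquad
\frac{2\eta_\ell\sinh(2\sqrt z\,\arcsin(\eta_\ell x))}{\sqrt z\,\sqrt{1-\omega^\ell x^2}}.
\]
Your integrals would carry an extra factor $1/(2n+1)$, resp.\ $1/(2n)$, in the $n$th coefficient relative to $\mathcal A_{q,r}$ and $\mathcal B_{q,r}$. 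The thresholds $\Re s>-1$ and $\Re s>-2$ happen to coincide, but the one nontrivial analytic point does not survive the change.

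The differentiated series have coefficients only of size $n^{-1/2}$; at $z=0$ the odd one is $\sum\binom{2n}{n}4^{-n}x^{2n}=(1-x^2)^{-1/2}$. So they do not converge absolutely on $|w|=1$. For $\omega^\ell=1$ (the case $q=1$ of Theorems \ref{thm:odd-mellin} and \ref{thm:even-mellin}) the integrand blows up like $(1-x)^{-1/2}$ at $x=1$. This has three consequences:
\begin{enumerate}
\item Your Step~1 bound is false for the functions actually integrated.
\item The assertion in Step~2 that ``the endpoint $x=1$ causes no trouble'' fails exactly when $\ell=0$.
\item The plan to bypass branches via absolute convergence on the closed disk cannot work.
\end{enumerate}

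The paper's proof treats the two endpoints separately using the closed forms.
\begin{itemize}
\item Near $x=0$ the integrands are $O(x^{\Re s})$ and $O(x^{\Re s+1})$. The latter holds because $\sinh(2\sqrt z\,w)/\sqrt z=O(w)$ uniformly for $z$ in a compact set.
\item Near $x=1$ the factor $(1-\omega^\ell x^2)^{-1/2}$ is comparable to $(1-x)^{-1/2}$ when $\omega^\ell=1$, and bounded when $\omega^\ell\ne1$. Lemma \ref{lem:branch-values} supplies continuity of $\arcsin(\eta_\ell x)$ up to $x=1$.
\item Extra powers of $|\log x|$ are harmless.
\end{itemize}

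Your Tonelli idea can be repaired into a branch-free alternative. For $z$ in a compact $K$, the coefficients $c_n$ of the differentiated odd series satisfy $|c_n|\le C_K(n+1)^{-1/2}$. Hence $g(x)=\sum_n|c_n|x^{2n}$ is a majorant independent of $(s,z)$, and for $\Re s\ge\sigma_0>-1$,
\[
\int_0^1x^{\sigma_0}|\log x|^k g(x)\dd x=\sum_{n\ge0}|c_n|\frac{k!}{(2n+1+\sigma_0)^{k+1}}<\infty .
\]
The even case is analogous, with the majorant $\sum_{n\ge1}|c_n|x^{2n-1}$ and denominators $(2n+\sigma_0)^{k+1}$. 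This single majorant gives local uniform convergence, differentiation in $s$, and termwise integration at once. It must, however, be built from the $n^{-1/2}$ coefficients of the derivative series, not from a sup bound on the closed disk.
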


\begin{proof}
Near \(x=0\), the odd integrands are bounded by a constant times \(x^{\Re s}\), which is integrable when \(\Re s>-1\).  The even integrands contain
\[
       \frac{\sinh(2\sqrt z\,\arcsin(\eta_\ell x))}{\sqrt z},
\]
with the limiting interpretation at \(z=0\).  The quotient is understood through its entire power series in \(z\); hence, for \(z\) in a compact set, it is uniformly \(O(w)\) as \(w\to0\), where \(w=\arcsin(\eta_\ell x)\).  It is therefore uniformly \(O(x)\) near \(0\), so the even integrands are bounded by a constant times \(x^{\Re s+1}\), integrable when \(\Re s>-2\).  Near \(x=1\), if \(\omega^\ell=1\), the denominator is comparable to \((1-x)^{-1/2}\), which is integrable.  If \(\omega^\ell\ne1\), the denominator is bounded away from zero.  Multiplying by powers of \(|\log x|\), as occurs when differentiating with respect to \(s\), preserves integrability on compact subregions of the same half-planes.
\end{proof}

\section{Root-of-unity, periodic-weight, and filtered Mellin kernels}\label{sec:filters}

\begin{theorem}\label{thm:root-unity}
Let \(q\ge1\), let \(0\le r<q\), let \(\omega=e^{2\pi i/q}\), and let \(\eta_\ell\) be defined by \eqref{eq:eta-definition}.  For \(m\ge0\),
\begin{equation}\label{eq:filtered-odd}
 \sum_{\substack{n\ge0\\ n\equiv r\pmod q}}
 \frac{\binom{2n}{n}}{4^n(2n+1)}t_n(\{2\}^m)
 =\frac1q\sum_{\ell=0}^{q-1}\omega^{-r\ell}
 \frac{(\arcsin\eta_\ell)^{2m+1}}
 {(2m+1)!\eta_\ell}.
\end{equation}
For \(m\ge1\),
\begin{equation}\label{eq:filtered-even}
 \sum_{\substack{n\ge1\\ n\equiv r\pmod q}}
 \frac{4^n}{n^2\binom{2n}{n}}\zeta_{n-1}(\{2\}^{m-1})
 =\frac1q\sum_{\ell=0}^{q-1}\omega^{-r\ell}
 \frac{2^{2m}(\arcsin\eta_\ell)^{2m}}
 {(2m)!}.
\end{equation}
\end{theorem}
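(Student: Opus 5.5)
The plan is the standard root-of-unity filter applied to the power series \(F_m\) and \(G_m\) of \eqref{eq:Fm-def} and \eqref{eq:Gm-def}, evaluated on the unit circle.

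\textbf{Filter identity.} For \(\omega=e^{2\pi i/q}\) and any integer \(n\) we have
\[
\frac1q\sum_{\ell=0}^{q-1}\omega^{(n-r)\ell}=
\begin{cases}1,& n\equiv r\pmod q,\\ 0,&\text{otherwise.}\end{cases}
\]
By Lemma \ref{lem:boundary-convergence}, the coefficient sequences of \(F_m\) and \(G_m\) are absolutely summable. The series \(F_m(\omega^\ell)\) therefore converges absolutely, and we may interchange the finite sum over \(\ell\) with the sum over \(n\):
\[
\frac1q\sum_{\ell=0}^{q-1}\omega^{-r\ell}F_m(\omega^\ell)
=\sum_{n\ge0}c_n\,\frac1q\sum_{\ell}\omega^{(n-r)\ell}
=\sum_{n\equiv r}c_n .
\]
Here \(c_n\) is the coefficient in \eqref{eq:Fm-def}, and the last sum is exactly the left side of \eqref{eq:filtered-odd}. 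The same computation with \(G_m\) gives the left side of \eqref{eq:filtered-even}; the fact that its summation starts at \(n\ge1\) causes no trouble.

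\textbf{Closed forms on the circle.} It remains to identify \(F_m(\omega^\ell)\) and \(G_m(\omega^\ell)\) in closed form. Lemma \ref{lem:branch-values} does this: the boundary values are the radial Abel limits of \eqref{eq:Fm-closed} and \eqref{eq:Gm-closed}, with \(\sqrt{\omega^\ell}\) replaced by \(\eta_\ell\). This yields
\[
F_m(\omega^\ell)=\frac{(\arcsin\eta_\ell)^{2m+1}}{(2m+1)!\,\eta_\ell},
\qquad
G_m(\omega^\ell)=\frac{2^{2m}(\arcsin\eta_\ell)^{2m}}{(2m)!}.
\]
Substituting these values into the filtered sums gives both \eqref{eq:filtered-odd} and \eqref{eq:filtered-even}.

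\textbf{Main obstacle: branches.} The one delicate point is the branch consistency along the radial path. For \(0<\rho<1\), the principal root \(\sqrt{\rho\omega^\ell}\) must equal \(\rho^{1/2}\eta_\ell\). This holds because \(\Arg(\rho\omega^\ell)=\Arg(\omega^\ell)\in(-\pi,\pi]\), and when \(\ell=q/2\) the principal value is \(i\). Along the path, the power-series arcsine must agree with the principal arcsine, and the limit at \(\rho\to1\) must be continuous. Lemma \ref{lem:branch-values} already supplies these facts, so I would simply cite it.

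A second point concerns the odd closed form. The expression \(\arcsin(\sqrt y)^{2m+1}/\sqrt y\) is an even function of \(\sqrt y\), so the choice of square root does not matter there. In \(G_m\) only even powers of \(\arcsin\sqrt y\) occur, so the same holds. This observation makes the formulas robust even if \(\eta_\ell\) were replaced by \(-\eta_\ell\).
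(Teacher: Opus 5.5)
Your proposal is correct and matches the paper's proof: the root-of-unity projection combined with the absolute summability from Lemma \ref{lem:boundary-convergence} gives the filtered sums as \(\frac1q\sum_\ell\omega^{-r\ell}F_m(\omega^\ell)\) (resp.\ \(G_m\)), and Lemma \ref{lem:branch-values} supplies the boundary values with \(\sqrt{\omega^\ell}=\eta_\ell\). Your extra remark is also correct: the principal \(\arcsin\) is odd away from its cuts, so both closed forms are even in \(\sqrt y\), and replacing \(\eta_\ell\) by \(-\eta_\ell\) would change nothing.
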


\begin{proof}
The projection identity
\[
 \frac1q\sum_{\ell=0}^{q-1}\omega^{\ell(n-r)}
 =\begin{cases}
 1,& n\equiv r\pmod q,\\
 0,& n\not\equiv r\pmod q
 \end{cases}
\]
and the absolute convergence from Lemma \ref{lem:boundary-convergence} give
\[
 \sum_{\substack{n\ge0\\ n\equiv r\pmod q}}
 \frac{\binom{2n}{n}}{4^n(2n+1)}t_n(\{2\}^m)
 =\frac1q\sum_{\ell=0}^{q-1}\omega^{-r\ell}F_m(\omega^\ell).
\]
By Lemma \ref{lem:branch-values}, \(F_m(\omega^\ell)\) is given by \eqref{eq:Fm-closed} with \(\sqrt{\omega^\ell}=\eta_\ell\).  This proves \eqref{eq:filtered-odd}.  The proof of \eqref{eq:filtered-even} is the same, using \(G_m\).
\end{proof}

\begin{theorem}\label{thm:periodic-weight}
Let \(\psi:\mathbb Z\to\C\) be periodic with period \(q\), let \(\omega=e^{2\pi i/q}\), let \(\eta_\ell\) be defined by \eqref{eq:eta-definition}, and define its finite Fourier coefficients by
\begin{equation}\label{eq:psi-hat}
        \widehat\psi(\ell):=\frac1q\sum_{r=0}^{q-1}\psi(r)\omega^{-r\ell},
        \qquad 0\le\ell<q.
\end{equation}
Then, for \(m\ge0\),
\begin{equation}\label{eq:periodic-odd}
 \sum_{n\ge0}\psi(n)
 \frac{\binom{2n}{n}}{4^n(2n+1)}t_n(\{2\}^m)
 =
 \sum_{\ell=0}^{q-1}\widehat\psi(\ell)
 \frac{(\arcsin\eta_\ell)^{2m+1}}{(2m+1)!\eta_\ell}.
\end{equation}
For \(m\ge1\),
\begin{equation}\label{eq:periodic-even}
 \sum_{n\ge1}\psi(n)
 \frac{4^n}{n^2\binom{2n}{n}}\zeta_{n-1}(\{2\}^{m-1})
 =
 \sum_{\ell=0}^{q-1}\widehat\psi(\ell)
 \frac{2^{2m}(\arcsin\eta_\ell)^{2m}}{(2m)!}.
\end{equation}
\end{theorem}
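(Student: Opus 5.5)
The plan is to reduce Theorem~\ref{thm:periodic-weight} to Theorem~\ref{thm:root-unity} by writing \(\psi\) in terms of the residue-class indicators. Since \(\psi\) has period \(q\), we have \(\psi(n)=\sum_{r=0}^{q-1}\psi(r)\,\mathbf 1_{n\equiv r\pmod q}\) for every integer \(n\ge0\). Lemma~\ref{lem:boundary-convergence} gives absolute convergence of the series \(F_m(1)\) and \(G_m(1)\). Because \(\psi\) takes only finitely many values, it is bounded, so the weighted series on the left of \eqref{eq:periodic-odd} and \eqref{eq:periodic-even} also converge absolutely. We may therefore split each of them into the \(q\) residue-class subseries and write
\[
 \sum_{n\ge0}\psi(n)\frac{\binom{2n}{n}}{4^n(2n+1)}t_n(\{2\}^m)
 =\sum_{r=0}^{q-1}\psi(r)\sum_{\substack{n\ge0\\ n\equiv r\pmod q}}\frac{\binom{2n}{n}}{4^n(2n+1)}t_n(\{2\}^m).
\]

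Next, substitute \eqref{eq:filtered-odd} for each inner sum. This produces the finite double sum
\[
 \sum_{r=0}^{q-1}\psi(r)\frac1q\sum_{\ell=0}^{q-1}\omega^{-r\ell}\frac{(\arcsin\eta_\ell)^{2m+1}}{(2m+1)!\,\eta_\ell}.
\]
Interchange the two finite sums. The coefficient of the \(\ell\)-th term is then \(\frac1q\sum_r\psi(r)\omega^{-r\ell}=\widehat\psi(\ell)\), by \eqref{eq:psi-hat}, and this is exactly \eqref{eq:periodic-odd}. The even case \eqref{eq:periodic-even} is handled the same way, using \eqref{eq:filtered-even}. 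In that case the summation starts at \(n\ge1\), so the residue class \(r=0\) simply omits \(n=0\), matching the convention of Theorem~\ref{thm:root-unity}.

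As an alternative, one could apply the inverse relation \(\psi(n)=\sum_\ell\widehat\psi(\ell)\omega^{n\ell}\) directly to write the left side as \(\sum_\ell\widehat\psi(\ell)F_m(\omega^\ell)\). Lemma~\ref{lem:branch-values} would then identify \(F_m(\omega^\ell)\) with the stated closed form. This is the same argument in dual form.

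There is no genuine obstacle here. The only point needing care is the justification for rearranging the infinite series into residue classes, and this is supplied by the absolute convergence in Lemma~\ref{lem:boundary-convergence} together with the boundedness of \(\psi\). The branch issues for \(\eta_\ell\) have already been resolved in Theorem~\ref{thm:root-unity}, which is used here as a black box.
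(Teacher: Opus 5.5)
Your argument is correct and is essentially the paper's proof. The paper inserts the inversion formula \(\psi(n)=\sum_{\ell}\widehat\psi(\ell)\omega^{\ell n}\) directly, reaching \(\sum_\ell\widehat\psi(\ell)F_m(\omega^\ell)\) (and the analogous \(G_m\) sum), exactly as in your alternative; your main route, which applies Theorem~\ref{thm:root-unity} to each residue class and then swaps the finite sums, is the same computation with the two finite sums taken in the other order, and both are justified by the absolute convergence in Lemma~\ref{lem:boundary-convergence} together with the boundedness of \(\psi\).
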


\begin{proof}
The finite Fourier inversion formula is
\[
        \psi(n)=\sum_{\ell=0}^{q-1}\widehat\psi(\ell)\omega^{\ell n}.
\]
Multiplying by the absolutely summable coefficient sequences and summing over \(n\) gives
\[
        \sum_{n\ge0}\psi(n)a_n^{(m)}
        =\sum_{\ell=0}^{q-1}\widehat\psi(\ell)F_m(\omega^\ell),
\]
where \(a_n^{(m)}\) denotes the odd-kernel coefficient.  Substitution of the boundary value for \(F_m\) proves \eqref{eq:periodic-odd}.  The even identity follows similarly from \(G_m\).
\end{proof}

\begin{remark}\label{rem:logarithmic-description}
For fixed \(q,r,m\), the residue-class values in \eqref{eq:filtered-odd} and \eqref{eq:filtered-even} are explicit finite \(\Q(\zeta_{2q})\)-linear combinations of powers of principal logarithms of algebraic numbers.  Indeed,
\begin{equation}\label{eq:lambda-def}
        \Lambda_\ell:=\arcsin\eta_\ell
        =-i\Log\left(i\eta_\ell+\sqrt{1-\eta_\ell^2}\right),
\end{equation}
where the argument of \(\Log\) is algebraic.  The coefficients in \eqref{eq:filtered-odd} lie in \(\Q(\zeta_{2q})\), and the coefficients in \eqref{eq:filtered-even} lie in \(\Q(\zeta_q)\subseteq\Q(\zeta_{2q})\).  No arithmetic-independence or transcendence claim is intended.
\end{remark}

The filter and Mellin deformation can be combined before coefficient extraction in \(z\).  This parent identity is useful later.

\begin{theorem}\label{thm:unified-filter-mellin}
Let \(q\ge1\), \(0\le r<q\), \(\omega=e^{2\pi i/q}\), and let \(\eta_\ell\) be defined by \eqref{eq:eta-definition}.  For \(\Re s>-1\),
\begin{multline}\label{eq:unified-A}
\mathcal A_{q,r}(z,s)
:=\sum_{\substack{n\ge0\\ n\equiv r\pmod q}}
\frac{\binom{2n}{n}}{4^n}\frac{P_n(z)}{2n+s+1} \\
=\frac1q\sum_{\ell=0}^{q-1}\omega^{-r\ell}
\int_0^1x^s
\frac{\cosh(\sqrt z\,\arcsin(\eta_\ell x))}
     {\sqrt{1-\omega^\ell x^2}}\dd x.
\end{multline}
For \(\Re s>-2\),
\begin{multline}\label{eq:unified-B}
\mathcal B_{q,r}(z,s)
:=\sum_{\substack{n\ge1\\ n\equiv r\pmod q}}
\frac{2\cdot4^n}{n\binom{2n}{n}}\frac{Q_n(z)}{2n+s} \\
=\frac1q\sum_{\ell=0}^{q-1}\omega^{-r\ell}
\int_0^1x^s
\frac{2\eta_\ell\sinh(2\sqrt z\,\arcsin(\eta_\ell x))}
     {\sqrt z\sqrt{1-\omega^\ell x^2}}\dd x.
\end{multline}
The second integrand is interpreted by its limiting value at \(z=0\).
\end{theorem}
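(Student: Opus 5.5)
The plan is to prove each identity for the single-root generating function first, then apply the root-of-unity projection exactly as in Theorem \ref{thm:root-unity}. For the odd case, define
\[
\Phi(z,s;y):=\sum_{n\ge0}\frac{\binom{2n}{n}}{4^n}\frac{P_n(z)}{2n+s+1}y^n,\qquad |y|\le1 .
\]
By Lemma \ref{lem:product-bounds} the coefficients are \(O(n^{-3/2})\) uniformly for \(z\) in compact sets and \(s\) in compact subsets of \(\Re s>-1\). Hence \(\Phi\) converges absolutely on the closed disk, and the projection identity gives
\[
\mathcal A_{q,r}=\frac1q\sum_\ell\omega^{-r\ell}\,\Phi(z,s;\omega^\ell).
\]

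The key step is to show that for \(0<\rho<1\) and \(|\xi|=1\),
\[
\Phi(z,s;\rho\xi)=\int_0^1 x^s\,\frac{\cosh(\sqrt z\arcsin(\sqrt{\rho\xi}\,x))}{\sqrt{1-\rho\xi x^2}}\,dx .
\]
Use \(\frac1{2n+s+1}=\int_0^1x^{2n+s}\,dx\). Then interchange sum and integral, which is justified by the uniform bound \(\rho^n n^{-1/2}\), summable since \(\rho<1\). This reduces the claim to the identity
\[
\sum_n \frac{\binom{2n}{n}}{4^n}P_n(z)w^{2n}=\frac{\cosh(\sqrt z\arcsin w)}{\sqrt{1-w^2}}
\]
for \(|w|<1\). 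That identity is \(\partial_x\OO\) from the proof of Theorem \ref{thm:kernels}, i.e.\ \eqref{eq:gauss-one}, evaluated at \(w=\sqrt{\rho\xi}\,x\). Here \(w^{2n}=\rho^n\xi^n x^{2n}\), so the branch of the square root is irrelevant on the left side. On the right side both \(\cosh(\sqrt z\,\cdot)\) and \(\sqrt{1-w^2}\) depend on \(w\) only through \(w^2\) together with the evenness of \(\cosh(\sqrt z\arcsin w)\) in \(w\). So the integrand is a well-defined analytic function of \(\rho\xi x^2\), and we may write \(\sqrt{\rho\omega^\ell}=\rho^{1/2}\eta_\ell\), as in Lemma \ref{lem:branch-values}.

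The even case follows the same pattern. Write \(\frac{2}{n(2n+s)}\) as \(\frac1n\int_0^1 2x^{2n+s-1}\,dx\). The relevant termwise series is
\[
\sum_n\frac{4^n}{n\binom{2n}{n}}Q_n(z)w^{2n-1}=\tfrac12\partial_w\EE(z,w)=\frac{\sinh(2\sqrt z\arcsin w)}{\sqrt z\sqrt{1-w^2}},
\]
which is taken from the proof of Theorem \ref{thm:kernels}. With \(w=\eta_\ell\rho^{1/2}x\) we have \(w^{2n}=\rho^n\omega^{\ell n}x^{2n}\), so we insert \(w^{2n}=w\cdot w^{2n-1}\). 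This produces the extra factor \(\eta_\ell\) together with \(x^{s}\), and the odd function \(\sinh(2\sqrt z\arcsin w)\) times \(w\) is again a function of \(w^2\). At \(z=0\) the integrand is read via the entire power series in \(z\).

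The main obstacle is the passage \(\rho\to1^-\) inside the integral. The left side converges to \(\Phi(z,s;\omega^\ell)\) by Abel's theorem, since the coefficients are absolutely summable. For the right side I would use dominated convergence with the bounds from Lemma \ref{lem:mellin-operations}, uniform in \(\rho\in[1/2,1]\). Near \(x=0\) the integrand is \(O(x^{\Re s})\) (resp.\ \(O(x^{\Re s+1})\)). The factor \(\cosh\) or \(\sinh\) stays bounded because \(|\arcsin(\eta_\ell\rho^{1/2}x)|\) is bounded on the closed region. For the denominator, \(|1-\rho\omega^\ell x^2|\ge 1-x^2\) when \(\omega^\ell=1\), which gives an integrable dominant. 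When \(\omega^\ell\ne1\) the modulus \(|1-\rho\omega^\ell x^2|\) is bounded below by \(|\sin(2\pi\ell/q)|\) or by \(1\), according to the sign of the real part of \(\omega^\ell\).

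Pointwise convergence of the integrand uses the continuity of the principal branches up to \(x=1\) from Lemma \ref{lem:branch-values}. At \(\omega^\ell=-1\) this needs \(\arcsin(i\rho^{1/2}x)\to\arcsin(ix)\), which holds because \(\arcsin\) is analytic on the imaginary axis. This gives the stated formulas for \(\Re s>-1\) and \(\Re s>-2\) respectively.
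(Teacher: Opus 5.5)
Your proof is correct. Its skeleton is the paper's: the differentiated kernel identity at the rotated point \(\eta_\ell x\), root-of-unity averaging, and termwise Mellin integration. The difference is in how you justify the termwise integration.

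You regularize with an Abel radius \(\rho<1\) and interchange using the geometric factor \(\rho^n\). You then remove \(\rho\) by Abel's theorem on the series side and by dominated convergence on the integral side. That last step forces you to supply bounds uniform in \(\rho\): boundedness of \(\arcsin\) on the segment \([0,\eta_\ell]\), the \(O(x)\) size of the \(\sinh\) factor, and the lower bounds for \(|1-\rho\omega^\ell x^2|\) when \(\omega^\ell\ne1\). All of these are available as you indicate.

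The paper needs no regularization. For \(0\le x<1\) one has \(|\eta_\ell x|<1\), so the differentiated kernel identity already holds pointwise on the whole interval of integration. With \(b_n=\binom{2n}{n}4^{-n}P_n(z)\), the estimate you proved yourself gives \(\sum_n|b_n|\int_0^1x^{2n+\Re s}\,dx=\sum_n|b_n|/(2n+\Re s+1)<\infty\). The even case has the analogous bound with \(2n+\Re s\). This lets Fubini--Tonelli integrate term by term over all of \([0,1]\) directly. The paper also mentions the equivalent variant of integrating over \([0,R]\) and letting \(R\to1^-\). It averages over \(\ell\) at the level of the integrand rather than on \(\Phi(z,s;\omega^\ell)\), which is immaterial.

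So the step you call the main obstacle can be bypassed entirely. The paper's route is shorter and needs only integrability of the limiting integrand (Lemma \ref{lem:mellin-operations}). Yours exhibits the boundary Mellin integrals explicitly as Abel limits of interior ones, in the spirit of Lemmas \ref{lem:boundary-convergence} and \ref{lem:branch-values}, at the cost of the extra uniform estimates. Pointwise convergence is only needed for \(x\in(0,1)\), where everything is analytic, so you do not need continuity of the branches at \(x=1\).
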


\begin{proof}
Differentiating \eqref{eq:odd-kernel-closed} with respect to \(x\) gives
\[
      \sum_{n=0}^{\infty}\frac{\binom{2n}{n}}{4^n}P_n(z)x^{2n}
      =\frac{\cosh(\sqrt z\,\arcsin x)}{\sqrt{1-x^2}}.
\]
Replace \(x\) by \(\eta_\ell x\), multiply by \(\omega^{-r\ell}\), and average over \(\ell\).  Since \(\eta_\ell^{2n}=\omega^{\ell n}\), the average selects exactly the terms with \(n\equiv r\pmod q\).  Multiplication by \(x^s\) and integration over \((0,1)\) give \eqref{eq:unified-A}.  The interchange with the Mellin integral is justified explicitly as follows.  If \(z\) ranges over a compact set \(K\), Lemma \ref{lem:product-bounds} gives \(|P_n(z)|\le C_K\) and \(|Q_n(z)|\le C_K\).  If \(S\Subset\{\Re s>-1\}\), then there is a constant \(c_S>0\) such that
\[
        |2n+s+1|\ge c_S(n+1),
        \qquad n\ge0,
        \quad s\in S.
\]
If \(S\Subset\{\Re s>-2\}\), then there is a constant \(c_S>0\) such that
\[
        |2n+s|\ge c_S n,
        \qquad n\ge1,
        \quad s\in S.
\]
Indeed, these inequalities follow by taking the minimum of the continuous functions \(|2n+s+1|/(n+1)\) and \(|2n+s|/n\) over the relevant compact sets and finitely many small \(n\), while the estimates are immediate for large \(n\).  Hence, locally uniformly on compact subsets of the relevant half-planes,
\[
   \frac{\binom{2n}{n}}{4^n}\frac{|P_n(z)|}{|2n+s+1|}=O_{K,S}(n^{-3/2}),
   \qquad
   \frac{2\cdot4^n}{n\binom{2n}{n}}\frac{|Q_n(z)|}{|2n+s|}=O_{K,S}(n^{-3/2}).
\]
The constants implicit in \(O_{K,S}\) depend only on the compact \(z\)-set \(K\) and the compact \(s\)-set \(S\).
These estimates give absolute and locally uniform convergence of the Mellin-transformed series.  Equivalently, one may integrate first over \([0,R]\), use normal convergence there, and then let \(R\to1^-\) by the square-root endpoint bound in Lemma \ref{lem:mellin-operations}.  Lemmas \ref{lem:branch-values} and \ref{lem:mellin-operations} justify the branches and the differentiability in \(s\).

For the even kernel, differentiation gives
\[
      \sum_{n=1}^{\infty}\frac{2\cdot4^n}{n\binom{2n}{n}}Q_n(z)x^{2n-1}
      =\frac{2\sinh(2\sqrt z\,\arcsin x)}{\sqrt z\sqrt{1-x^2}}.
\]
After replacing \(x\) by \(\eta_\ell x\), multiplication by \(\eta_\ell\) changes \((\eta_\ell x)^{2n-1}\) into \(\omega^{\ell n}x^{2n-1}\).  The same projection and Mellin integration give \eqref{eq:unified-B}.
\end{proof}

\begin{corollary}\label{cor:coefficient-filtered-mellin}
Let \(q\ge1\), \(0\le r<q\), \(\omega=e^{2\pi i/q}\), and let \(\eta_\ell\) be defined by \eqref{eq:eta-definition}.  For \(m\ge0\) and \(\Re s>-1\),
\begin{multline}\label{eq:coeff-mellin-odd}
\sum_{\substack{n\ge0\\ n\equiv r\pmod q}}
\frac{\binom{2n}{n}}{4^n}\frac{t_n(\{2\}^m)}{2n+s+1} \\
=\frac1{q(2m)!}\sum_{\ell=0}^{q-1}\omega^{-r\ell}
\int_0^1 x^s
\frac{(\arcsin(\eta_\ell x))^{2m}}{\sqrt{1-\omega^\ell x^2}}\dd x .
\end{multline}
For \(m\ge1\) and \(\Re s>-2\),
\begin{multline}\label{eq:coeff-mellin-even}
\sum_{\substack{n\ge1\\ n\equiv r\pmod q}}
\frac{2\cdot4^n}{n\binom{2n}{n}}
\frac{\zeta_{n-1}(\{2\}^{m-1})}{2n+s} \\
=\frac{2^{2m}}{q(2m-1)!}\sum_{\ell=0}^{q-1}\omega^{-r\ell}\eta_\ell
\int_0^1 x^s
\frac{(\arcsin(\eta_\ell x))^{2m-1}}{\sqrt{1-\omega^\ell x^2}}\dd x .
\end{multline}
The identities are locally uniform in \(s\) on compact subsets of the indicated half-planes.
\end{corollary}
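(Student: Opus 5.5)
The plan is to extract coefficients of powers of \(z\) from the two identities of Theorem \ref{thm:unified-filter-mellin}. Both sides of \eqref{eq:unified-A} and \eqref{eq:unified-B} are entire in \(z\) for fixed \(s\) in the indicated half-plane. The resulting Taylor coefficients are then identified using \eqref{eq:PQ-coeffs}.

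On the left of \eqref{eq:unified-A}, the series converges normally for \(z\) in compact sets. This follows from the bound \(O_{K,S}(n^{-3/2})\) established in the proof of Theorem \ref{thm:unified-filter-mellin}. Weierstrass's theorem then allows termwise extraction of the coefficient of \(z^m\) (for instance by a Cauchy integral over \(|z|=R\)). Since \(P_n(z)=\sum_m t_n(\{2\}^m)z^m\), the coefficient of \(z^m\) on the left is exactly the left side of \eqref{eq:coeff-mellin-odd}.

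On the right, I expand
\[
\cosh(\sqrt z\,w)=\sum_{m\ge0}\frac{w^{2m}}{(2m)!}z^m,
\qquad w=\arcsin(\eta_\ell x).
\]
For \(0\le x\le1\), \(w\) is bounded by Lemma \ref{lem:branch-values}, so this series converges uniformly in \(x\). It may therefore be integrated termwise against \(x^s(1-\omega^\ell x^2)^{-1/2}\), which is integrable by Lemma \ref{lem:mellin-operations}. Equivalently, one can apply the Cauchy coefficient formula in \(z\) and use Fubini, since the integrand is jointly continuous and dominated uniformly for \(|z|=R\). Either way, the coefficient of \(z^m\) is
\[
\frac1{q(2m)!}\sum_\ell\omega^{-r\ell}\int_0^1 x^s\,w^{2m}(1-\omega^\ell x^2)^{-1/2}\,dx,
\]
which gives \eqref{eq:coeff-mellin-odd}.

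For the even case, I use the expansion
\[
\frac{2\sinh(2\sqrt z\,w)}{\sqrt z}=\sum_{k\ge0}\frac{2\cdot2^{2k+1}w^{2k+1}}{(2k+1)!}z^k.
\]
Taking \(k=m-1\), the coefficient is \(2^{2m}w^{2m-1}/(2m-1)!\), including the factor \(\eta_\ell\) that appears in the integrand. On the left of \eqref{eq:unified-B}, the coefficient of \(z^{m-1}\) in \(Q_n(z)\) is \(\zeta_{n-1}(\{2\}^{m-1})\). Comparing the two sides yields \eqref{eq:coeff-mellin-even}.

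The only real obstacle is justifying the interchange of the \(z\)-coefficient extraction with both the infinite sum over \(n\) and the improper integral near \(x=1\) when \(\omega^\ell=1\). I would handle this with the Cauchy formula on a fixed circle \(|z|=R\). The series bounds from Theorem \ref{thm:unified-filter-mellin} and the integrability bounds of Lemma \ref{lem:mellin-operations} are uniform on that circle, so dominated convergence and Fubini apply.

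Local uniformity in \(s\) then follows because all these bounds are uniform for \(s\) in compact subsets \(S\) of the half-planes.
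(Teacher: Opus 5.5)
Your proposal is correct and matches the paper's proof. Both extract the coefficient of \(z^m\) (resp.\ \(z^{m-1}\)) from Theorem \ref{thm:unified-filter-mellin} via \eqref{eq:PQ-coeffs} and the \(\cosh\)/\(\sinh\) expansions, justified by normal convergence in \(z\) and the Mellin estimates; you simply make the interchange explicit with a Cauchy integral and Fubini.

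One small caution on the even case. The bare weight \(x^s(1-\omega^\ell x^2)^{-1/2}\) is not integrable at \(0\) when \(-2<\Re s\le-1\), so a termwise-integration argument must keep one factor \(w=\arcsin(\eta_\ell x)=O(x)\) inside the weight. Your Cauchy-circle route already handles this, since it uses the uniform \(O(x^{\Re s+1})\) bound of Lemma \ref{lem:mellin-operations}.
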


\begin{proof}
Extract the coefficient of \(z^m\) from \eqref{eq:unified-A}.  The expansion
\[
        \cosh(\sqrt z\,\arcsin(\eta_\ell x))
        =\sum_{m=0}^{\infty}\frac{z^m(\arcsin(\eta_\ell x))^{2m}}{(2m)!}
\]
produces \eqref{eq:coeff-mellin-odd}.  Extracting the coefficient of \(z^{m-1}\) from \eqref{eq:unified-B}, and using
\[
        \frac{2\eta_\ell\sinh(2\sqrt z\,\arcsin(\eta_\ell x))}{\sqrt z}
        =\sum_{m=1}^{\infty}
        \frac{2^{2m}\eta_\ell z^{m-1}(\arcsin(\eta_\ell x))^{2m-1}}{(2m-1)!},
\]
gives \eqref{eq:coeff-mellin-even}.  The coefficient extraction is justified by normal convergence in \(z\) on compact sets and by the Mellin estimates in the proof of Theorem \ref{thm:unified-filter-mellin}.
\end{proof}

The quadratic case \(q=2\) of Corollary \ref{cor:coefficient-filtered-mellin} is the analytic source of the explicit Mellin identities in Theorem \ref{thm:quadratic-mellin}.

\section{Main quadratic filter and quartic identities}\label{sec:quartic}

The case \(q=2\) extracts even and odd subsequences.  Since
\[
        \binom{2(2r)}{2r}=\binom{4r}{2r},
\]
this is the natural source of quartic central-binomial identities.  Throughout this section
\begin{equation}\label{eq:L-def}
       L:=\arsinh1=\log(1+\sqrt2).
\end{equation}

\begin{theorem}\label{thm:quadratic-filter}
For every integer \(m\ge0\),
\begin{equation}\label{eq:quartic-even-residue-odd}
 \sum_{r=0}^{\infty}
 \frac{\binom{4r}{2r}}{16^r(4r+1)}
 t_{2r}(\{2\}^m)
 =
 \frac{\pi^{2m+1}+(-1)^m(2L)^{2m+1}}
 {2^{2m+2}(2m+1)!},
\end{equation}
\begin{equation}\label{eq:quartic-odd-residue-odd}
 \sum_{r=0}^{\infty}
 \frac{\binom{4r+2}{2r+1}}{2^{4r+2}(4r+3)}
 t_{2r+1}(\{2\}^m)
 =
 \frac{\pi^{2m+1}-(-1)^m(2L)^{2m+1}}
 {2^{2m+2}(2m+1)!}.
\end{equation}
For every integer \(m\ge1\),
\begin{equation}\label{eq:quartic-even-residue-even-scaled}
 \sum_{r=1}^{\infty}
 \frac{16^r}{r^2\binom{4r}{2r}}
 \zeta_{2r-1}(\{2\}^{m-1})
 =
 \frac{2\{\pi^{2m}+(-1)^m(2L)^{2m}\}}
 {(2m)!},
\end{equation}
\begin{equation}\label{eq:quartic-odd-residue-even}
 \sum_{r=0}^{\infty}
 \frac{4^{2r+1}}{(2r+1)^2\binom{4r+2}{2r+1}}
 \zeta_{2r}(\{2\}^{m-1})
 =
 \frac{\pi^{2m}-(-1)^m(2L)^{2m}}
 {2(2m)!}.
\end{equation}
\end{theorem}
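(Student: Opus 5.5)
The plan is to specialize Theorem~\ref{thm:root-unity} to the quadratic case \(q=2\), \(\omega=-1\), and evaluate the two boundary terms explicitly. By \eqref{eq:eta-piecewise}, \(\eta_0=1\) and \(\eta_1=i\), since \(\ell=1=q/2\). The relevant inverse sines are \(\arcsin 1=\pi/2\) and, by \eqref{eq:arcsin-i} in Lemma~\ref{lem:branch-values}, \(\arcsin i=iL\). Absolute convergence on \(|y|=1\) and the identification of the boundary values with the closed forms are already supplied by Lemmas~\ref{lem:boundary-convergence} and \ref{lem:branch-values}, so no further analysis is needed. The residue classes \(r=0\) and \(r=1\) correspond to \(n=2r'\) and \(n=2r'+1\), and the weights \(\omega^{-r\ell}\) reduce to \((-1)^{r}\) on the \(\ell=1\) term.

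\textbf{Odd family.} The \(\ell=1\) term in \eqref{eq:filtered-odd} is
\[
(iL)^{2m+1}/\bigl((2m+1)!\,i\bigr)=i^{2m}L^{2m+1}/(2m+1)!=(-1)^mL^{2m+1}/(2m+1)!.
\]
Hence the class \(n\equiv r\pmod 2\) sums to
\[
\frac{1}{2(2m+1)!}\Bigl[(\pi/2)^{2m+1}+(-1)^{r}(-1)^mL^{2m+1}\Bigr]
=\frac{\pi^{2m+1}+(-1)^{r+m}(2L)^{2m+1}}{2^{2m+2}(2m+1)!}.
\]
Writing \(n=2r'\), the summand becomes \(\binom{4r'}{2r'}/(16^{r'}(4r'+1))\cdot t_{2r'}(\{2\}^m)\), which gives \eqref{eq:quartic-even-residue-odd}. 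Writing \(n=2r'+1\), the factor \(4^n\) becomes \(2^{4r'+2}\), which gives \eqref{eq:quartic-odd-residue-odd}.

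\textbf{Even family.} From \eqref{eq:filtered-even}, using \((iL)^{2m}=(-1)^mL^{2m}\), the class \(n\equiv r\pmod 2\) equals
\[
\frac{\pi^{2m}+(-1)^{r+m}(2L)^{2m}}{2(2m)!}.
\]
For \(n=2r'+1\) this is exactly \eqref{eq:quartic-odd-residue-even}. For \(n=2r'\) the summand is
\[
\frac{4^{2r'}}{4r'^2\binom{4r'}{2r'}}\,\zeta_{2r'-1}(\{2\}^{m-1}).
\]
Multiplying the identity by \(4\) then yields \eqref{eq:quartic-even-residue-even-scaled}.

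The only delicate point is the branch bookkeeping at \(\ell=1\). One needs \(\sqrt{-1}=\eta_1=i\), not \(-i\), and needs \(\arcsin i=iL\) with the principal branch. Both are fixed by the conventions of Section~\ref{sec:branches} and Lemma~\ref{lem:branch-values}. In any case the odd-kernel term \((\arcsin\eta)^{2m+1}/\eta\) and the even-kernel term \((\arcsin\eta)^{2m}\) are even functions of \(\eta\), so the sign choice would not affect the result. Beyond that, the proof is the routine power-of-two simplification carried out above.
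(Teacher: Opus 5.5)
Your proposal is correct and follows essentially the same route as the paper. You specialize the root-of-unity filter to \(q=2\), evaluate the boundary values at \(\pm1\) using \(\arcsin 1=\pi/2\) and \(\arcsin i=iL\), substitute \(n=2r\) and \(n=2r+1\), and multiply the even-class reciprocal identity by \(4\). Your extra remark is also correct: since the principal \(\arcsin\) is odd, the \(\ell=1\) terms do not depend on the choice \(\eta_1=\pm i\).
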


\begin{proof}
For the odd kernel, Theorem \ref{thm:root-unity} with \(q=2\) gives
\[
 \sum_{n\equiv0\,(2)}a_n=\frac12\{F_m(1)+F_m(-1)\},
 \qquad
 \sum_{n\equiv1\,(2)}a_n=\frac12\{F_m(1)-F_m(-1)\},
\]
where
\[
       a_n=\frac{\binom{2n}{n}}{4^n(2n+1)}t_n(\{2\}^m).
\]
The branch convention gives
\[
       F_m(1)=\frac{(\pi/2)^{2m+1}}{(2m+1)!},
       \qquad
       F_m(-1)=\frac{(iL)^{2m+1}}{(2m+1)!i}
       =\frac{(-1)^mL^{2m+1}}{(2m+1)!}.
\]
Substitution of \(n=2r\) and \(n=2r+1\) proves \eqref{eq:quartic-even-residue-odd} and \eqref{eq:quartic-odd-residue-odd}.

For the reciprocal even kernel, the same projection gives
\[
 \sum_{n\equiv0\,(2)}b_n=\frac12\{G_m(1)+G_m(-1)\},
 \qquad
 \sum_{n\equiv1\,(2)}b_n=\frac12\{G_m(1)-G_m(-1)\},
\]
where
\[
       b_n=\frac{4^n}{n^2\binom{2n}{n}}\zeta_{n-1}(\{2\}^{m-1}).
\]
Here
\[
       G_m(1)=\frac{\pi^{2m}}{(2m)!},
       \qquad
       G_m(-1)=\frac{(-1)^m(2L)^{2m}}{(2m)!}.
\]
The even class \(n=2r\) gives
\[
 \sum_{r=1}^{\infty}
 \frac{16^r}{(2r)^2\binom{4r}{2r}}\zeta_{2r-1}(\{2\}^{m-1})
 =\frac{\pi^{2m}+(-1)^m(2L)^{2m}}{2(2m)!}.
\]
Multiplying by \(4\) gives \eqref{eq:quartic-even-residue-even-scaled}.  The odd class \(n=2r+1\) gives \eqref{eq:quartic-odd-residue-even}.
\end{proof}

\begin{corollary}\label{cor:quartic-examples}
The following identities hold:
\begin{align}
 \sum_{r=0}^{\infty}\frac{\binom{4r}{2r}}{16^r(4r+1)}
 &=\frac{\pi+2L}{4},\label{eq:q2-even-m0}\\
 \sum_{r=0}^{\infty}\frac{\binom{4r+2}{2r+1}}{2^{4r+2}(4r+3)}
 &=\frac{\pi-2L}{4},\label{eq:q2-odd-m0}\\
 \sum_{r=1}^{\infty}\frac{16^r}{r^2\binom{4r}{2r}}
 &=\pi^2-4L^2,\label{eq:q2-even-reciprocal-m1}\\
 \sum_{r=0}^{\infty}
 \frac{\binom{4r}{2r}}{16^r(4r+1)}
 \left(4H_{4r}^{(2)}-H_{2r}^{(2)}\right)
 &=\frac{\pi^3-8L^3}{24}.\label{eq:q2-harmonic-m1}
\end{align}
\end{corollary}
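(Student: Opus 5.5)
These four identities are the low-weight specializations of Theorem \ref{thm:quadratic-filter}, so the plan is to substitute small values of \(m\) and simplify. No new analysis is needed. Convergence at the endpoints and the branch choices were already settled in Lemmas \ref{lem:boundary-convergence} and \ref{lem:branch-values}, and those feed directly into the theorem.

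For \eqref{eq:q2-even-m0} and \eqref{eq:q2-odd-m0}, I will take \(m=0\) in \eqref{eq:quartic-even-residue-odd} and \eqref{eq:quartic-odd-residue-odd}. With \(m=0\) we have \(t_n(\varnothing)=1\). The right sides become \((\pi\pm 2L)/(2^2\cdot 1!)\), which is exactly what is claimed.

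For \eqref{eq:q2-even-reciprocal-m1}, I will take \(m=1\) in \eqref{eq:quartic-even-residue-even-scaled}. Then \(\zeta_{2r-1}(\varnothing)=1\), and the right side is
\[
\frac{2\{\pi^2-4L^2\}}{2!}=\pi^2-4L^2.
\]

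The last identity \eqref{eq:q2-harmonic-m1} is the only step with any content. I will take \(m=1\) in \eqref{eq:quartic-even-residue-odd}. The right side is
\[
\frac{\pi^3-(2L)^3}{2^4\cdot 3!}=\frac{\pi^3-8L^3}{96}.
\]
On the left, the weight is \(t_{2r}(\{2\})=O_{2r}^{(2)}\). By \eqref{eq:H-and-O}, this equals \(H_{4r}^{(2)}-\tfrac14H_{2r}^{(2)}=\tfrac14\bigl(4H_{4r}^{(2)}-H_{2r}^{(2)}\bigr)\). Multiplying both sides by \(4\) then gives \((\pi^3-8L^3)/24\).

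The only point needing care is this last conversion. I must use the identity \(O_n^{(2)}=H_{2n}^{(2)}-\tfrac14 H_n^{(2)}\) with \(n=2r\), and then track the overall factor \(4\) correctly. Everything else is direct arithmetic.
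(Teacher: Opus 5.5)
Your proposal is correct and follows the paper's proof exactly: specialize Theorem \ref{thm:quadratic-filter} at \(m=0\) and \(m=1\). For the last identity, rewrite \(t_{2r}(\{2\})=O_{2r}^{(2)}=\tfrac14\bigl(4H_{4r}^{(2)}-H_{2r}^{(2)}\bigr)\) and absorb the factor \(4\) into the right side.
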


\begin{proof}
The first two identities are \eqref{eq:quartic-even-residue-odd} and \eqref{eq:quartic-odd-residue-odd} with \(m=0\).  Identity \eqref{eq:q2-even-reciprocal-m1} is \eqref{eq:quartic-even-residue-even-scaled} with \(m=1\).  For the last identity, use \eqref{eq:quartic-even-residue-odd} with \(m=1\) and
\[
       t_{2r}(\{2\})=O_{2r}^{(2)}
       =H_{4r}^{(2)}-\frac14H_{2r}^{(2)}
       =\frac14\left(4H_{4r}^{(2)}-H_{2r}^{(2)}\right).
\]
\end{proof}

The case \(m=2\) gives a less compressed harmonic-number identity.  Since
\[
 t_{2r}(\{2\}^2)=\frac12\left\{\left(H_{4r}^{(2)}-\frac14H_{2r}^{(2)}\right)^2-
 \left(H_{4r}^{(4)}-\frac1{16}H_{2r}^{(4)}\right)\right\},
\]
Theorem \ref{thm:quadratic-filter} gives
\begin{multline}\label{eq:q2-harmonic-m2}
 \sum_{r=0}^{\infty}
 \frac{\binom{4r}{2r}}{16^r(4r+1)}
 \frac12\left\{\left(H_{4r}^{(2)}-\frac14H_{2r}^{(2)}\right)^2-
 \left(H_{4r}^{(4)}-\frac1{16}H_{2r}^{(4)}\right)\right\} \\
 =\frac{\pi^5+(2L)^5}{2^6 5!}.
\end{multline}
This example shows explicitly how the repeated-sum notation translates into ordinary harmonic numbers.  The reciprocal even kernel has an equally compact low-weight form.  Taking \(m=2\) in \eqref{eq:quartic-even-residue-even-scaled} and using \(\zeta_{2r-1}(\{2\})=H_{2r-1}^{(2)}\) gives
\begin{equation}\label{eq:reciprocal-even-m2-example}
 \sum_{r=1}^{\infty}
 \frac{16^r}{r^2\binom{4r}{2r}}H_{2r-1}^{(2)}
 =\frac{\pi^4+(2L)^4}{12}.
\end{equation}
This companion identity shows that the reciprocal even kernel is not merely auxiliary.

\section{Interior quadratic filters and acceleration}\label{sec:interior}

Let \(0<u\le1\) and put
\begin{equation}\label{eq:sigma-def}
        \sigma:=u^{1/4},
        \qquad
        A:=\arcsin\sigma,
        \qquad
        B:=\arsinh\sigma.
\end{equation}
The interior quadratic filter evaluates even and odd subsequences at the interior points \(\pm\sqrt u\).  For \(u<1\), the resulting quartic series are accelerated.

\begin{theorem}\label{thm:interior-quadratic}
Let \(0<u\le1\).  For every \(m\ge0\),
\begin{equation}\label{eq:interior-even-odd}
 \sum_{r=0}^{\infty}
 \frac{\binom{4r}{2r}}{16^r(4r+1)}t_{2r}(\{2\}^m)u^r
 =\frac{A^{2m+1}+(-1)^mB^{2m+1}}
 {2(2m+1)!\sigma},
\end{equation}
\begin{equation}\label{eq:interior-odd-odd}
 \sum_{r=0}^{\infty}
 \frac{\binom{4r+2}{2r+1}}{2^{4r+2}(4r+3)}t_{2r+1}(\{2\}^m)u^r
 =\frac{A^{2m+1}-(-1)^mB^{2m+1}}
 {2(2m+1)!\sigma^3}.
\end{equation}
For every \(m\ge1\),
\begin{equation}\label{eq:interior-even-even}
 \sum_{r=1}^{\infty}
 \frac{16^r u^r}{r^2\binom{4r}{2r}}\zeta_{2r-1}(\{2\}^{m-1})
 =\frac{2^{2m+1}}{(2m)!}
 \left(A^{2m}+(-1)^mB^{2m}\right),
\end{equation}
\begin{equation}\label{eq:interior-odd-even}
 \sum_{r=0}^{\infty}
 \frac{4^{2r+1}u^r}{(2r+1)^2\binom{4r+2}{2r+1}}\zeta_{2r}(\{2\}^{m-1})
 =\frac{2^{2m-1}}{\sigma^2(2m)!}
 \left(A^{2m}-(-1)^mB^{2m}\right).
\end{equation}
\end{theorem}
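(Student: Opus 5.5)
The plan is to run the quadratic projection of Theorem \ref{thm:quadratic-filter} at the interior points \(y=\pm\sqrt u\) instead of \(y=\pm1\), with \(F_m\) and \(G_m\) from \eqref{eq:Fm-def}--\eqref{eq:Gm-def}. For \(0<u<1\), both \(\pm\sqrt u\) lie in the open unit disk, so no boundary argument is needed there. The even and odd parts of \(F_m\) are
\[
 \tfrac12\{F_m(\sqrt u)+F_m(-\sqrt u)\}=\sum_{r\ge0}a_{2r}u^r,
 \qquad
 \tfrac12\{F_m(\sqrt u)-F_m(-\sqrt u)\}=\sqrt u\sum_{r\ge0}a_{2r+1}u^r,
\]
where \(a_n\) is the odd-kernel coefficient, and \(G_m\) splits in the same way. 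The case \(u=1\) is Theorem \ref{thm:quadratic-filter} itself; alternatively it follows from the interior case by Abel's theorem, since Lemma \ref{lem:boundary-convergence} gives absolute convergence of the coefficient series.

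Next I will evaluate the closed forms \eqref{eq:Fm-closed} and \eqref{eq:Gm-closed} at \(y=\pm\sigma^2\). At \(y=\sigma^2\), \(\sqrt y=\sigma\) and \(\arcsin\sqrt y=A\). At \(y=-\sigma^2\), the principal square root is \(i\sigma\), and \(\arcsin(i\sigma)=i\arsinh\sigma=iB\); this is the interior analogue of \eqref{eq:arcsin-i} and follows directly from \eqref{eq:principal-arcsin}. This gives
\[
F_m(\sigma^2)=\frac{A^{2m+1}}{(2m+1)!\,\sigma},
\qquad
F_m(-\sigma^2)=\frac{(iB)^{2m+1}}{(2m+1)!\,i\sigma}=\frac{(-1)^mB^{2m+1}}{(2m+1)!\,\sigma},
\]
and
\[
G_m(\sigma^2)=\frac{2^{2m}A^{2m}}{(2m)!},
\qquad
G_m(-\sigma^2)=\frac{(-1)^m2^{2m}B^{2m}}{(2m)!}.
\]
Since the choice of branch is the only delicate point, I can avoid it entirely by using the power series: the odd powers of \(\arcsin\) composed with \(i\sigma\) are obtained from Corollary \ref{cor:coefficient-identities} with \(x=i\sigma\), and that series identity holds on the whole disk.

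Finally I will substitute \(n=2r\) and \(n=2r+1\) and normalize. For the odd kernel, the even part gives \eqref{eq:interior-even-odd} directly. The odd part, after dividing by \(\sqrt u=\sigma^2\), gives the factor \(\sigma^{-3}\) in \eqref{eq:interior-odd-odd}. For the even kernel, the class \(n=2r\) has coefficient \(16^r/(4r^2\binom{4r}{2r})\), so I multiply by \(4\), which yields the factor \(2^{2m+1}\) in \eqref{eq:interior-even-even}. The class \(n=2r+1\) yields \(\sqrt u\sum_r(\cdots)u^r\); dividing by \(\sigma^2\) gives \(2^{2m}/(2\sigma^2(2m)!)\), matching \eqref{eq:interior-odd-even}.

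The only real obstacle is bookkeeping of these constants and making sure the index ranges are right: the even kernel's sum starts at \(n=1\), so the even class starts at \(r=1\), while the odd class starts at \(r=0\).
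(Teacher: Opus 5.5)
Your proposal is correct and follows the paper's own proof: split \(F_m\) and \(G_m\) into even and odd parts at \(y=\pm\sqrt u=\pm\sigma^2\), use \(\sqrt{-\sigma^2}=i\sigma\) and \(\arcsin(i\sigma)=iB\), then normalize (multiply by \(4\) for the \((2r)^2\) in the even class of \(G_m\), divide by \(\sigma^2\) for the odd classes). Your added remarks, that \(u=1\) is exactly Theorem \ref{thm:quadratic-filter} (or follows by Abel's theorem) and that \(\arcsin(i\sigma)=i\arsinh\sigma\) can be read off branch-free from the power series, are valid, and make explicit points the paper's short proof leaves implicit.
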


\begin{proof}
Let \(v=\sqrt u=\sigma^2\).  If \(a_n\) denotes the coefficient in \(F_m\), then
\[
        \sum_{r\ge0}a_{2r}u^r=\frac{F_m(v)+F_m(-v)}2,
        \qquad
        \sum_{r\ge0}a_{2r+1}u^r=\frac{F_m(v)-F_m(-v)}{2v}.
\]
For real \(0<u\le1\), \(\sqrt v=\sigma\) and \(\sqrt{-v}=i\sigma\).  Thus
\[
        F_m(v)=\frac{A^{2m+1}}{(2m+1)!\sigma},
        \qquad
        F_m(-v)=\frac{(iB)^{2m+1}}{(2m+1)!i\sigma}
        =\frac{(-1)^mB^{2m+1}}{(2m+1)!\sigma}.
\]
These identities give \eqref{eq:interior-even-odd} and \eqref{eq:interior-odd-odd}.  The proof for \(G_m\) is the same, using
\[
        G_m(v)=\frac{2^{2m}A^{2m}}{(2m)!},
        \qquad
        G_m(-v)=\frac{2^{2m}(-1)^mB^{2m}}{(2m)!},
\]
and the fact that the even class \(n=2r\) contributes the factor \((2r)^2\), which accounts for the multiplication by \(4\) in \eqref{eq:interior-even-even}.
\end{proof}

\begin{theorem}\label{thm:tail-estimates}
Put
\begin{equation}\label{eq:tail-constants-def}
      T_m:=\frac1{m!}\left(\frac{\pi^2}{8}\right)^m,
      \qquad
      Z_m:=\frac{\zeta(2)^m}{m!}.
\end{equation}
For \(0<u<1\) and \(N\ge0\), the even-residue odd-kernel tail satisfies
\begin{equation}\label{eq:tail-even-odd-explicit}
\left|\sum_{r>N}
 \frac{\binom{4r}{2r}}{16^r(4r+1)}t_{2r}(\{2\}^m)u^r\right|
\le \frac{T_m}{4\sqrt{2\pi}}
\frac{u^{N+1}}{(N+1)^{3/2}(1-u)}.
\end{equation}
The odd-residue odd-kernel tail satisfies
\begin{equation}\label{eq:tail-odd-odd-explicit}
\left|\sum_{r>N}
 \frac{\binom{4r+2}{2r+1}}{2^{4r+2}(4r+3)}t_{2r+1}(\{2\}^m)u^r\right|
\le \frac{T_m}{\sqrt\pi}
\frac{u^{N+1}}{(N+1)^{3/2}(1-u)}.
\end{equation}
For \(m\ge1\), the reciprocal even-kernel tails satisfy
\begin{equation}\label{eq:tail-even-even-explicit}
\left|\sum_{r>N}
 \frac{16^r u^r}{r^2\binom{4r}{2r}}\zeta_{2r-1}(\{2\}^{m-1})\right|
\le 2\sqrt2\, Z_{m-1}
\frac{u^{N+1}}{(N+1)^{3/2}(1-u)},
\end{equation}
\begin{equation}\label{eq:tail-odd-even-explicit}
\left|\sum_{r>N}
 \frac{4^{2r+1}u^r}{(2r+1)^2\binom{4r+2}{2r+1}}\zeta_{2r}(\{2\}^{m-1})\right|
\le 2 Z_{m-1}
\frac{u^{N+1}}{(N+1)^{3/2}(1-u)}.
\end{equation}
At the endpoint \(u=1\), the corresponding tails are \(O_m((N+1)^{-1/2})\).  More explicitly, if \(C_m\) denotes the displayed coefficient constant in any one of \eqref{eq:tail-even-odd-explicit}--\eqref{eq:tail-odd-even-explicit}, then for all \(N\ge0\) the corresponding endpoint tail is bounded by \(3C_m(N+1)^{-1/2}\).  For \(N\ge1\), one may use the sharper bound \(2C_mN^{-1/2}\).
\end{theorem}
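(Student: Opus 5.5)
The plan is to bound each tail termwise by a product of three explicit factors: a uniform bound on the finite repeated sum, an explicit bound on the central-binomial factor, and a geometric factor \(u^r\). For the repeated sums, Lemma \ref{lem:finite-products} gives the generating products, and comparing elementary symmetric polynomials with the exponential of the power sum gives
\[
t_n(\{2\}^m)\le\frac{1}{m!}\Bigl(\sum_{j\ge1}\frac1{(2j-1)^2}\Bigr)^m=T_m,
\qquad
\zeta_{n-1}(\{2\}^{m-1})\le\frac{\zeta(2)^{m-1}}{(m-1)!}=Z_{m-1}.
\]
This holds uniformly in \(n\), since \(e_m(a_1,\dots,a_n)\le (a_1+\cdots+a_n)^m/m!\) for nonnegative \(a_j\).

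For the binomial factors I would use the classical two-sided bounds
\[
\frac{4^k}{\sqrt{\pi(k+1/2)}}\le\binom{2k}{k}\le\frac{4^k}{\sqrt{\pi k}},
\qquad k\ge1.
\]
With \(k=2r\), this gives \(\binom{4r}{2r}/16^r\le(2\pi r)^{-1/2}\), so the even-residue odd-kernel term is at most
\[
\frac{T_m}{\sqrt{2\pi}}\,\frac{1}{r^{1/2}(4r+1)}u^r\le\frac{T_m}{4\sqrt{2\pi}}\,r^{-3/2}u^r.
\]
The odd-residue term is treated the same way with \(k=2r+1\), using \((2r+1)^{-1/2}(4r+3)^{-1}\le\frac{1}{\sqrt\pi}\cdot\ldots\). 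I would simply check that the stated constant \(1/\sqrt\pi\) dominates, which looks generous. For the reciprocal kernels, the lower binomial bound gives \(16^r/\binom{4r}{2r}\le\sqrt{\pi(2r+1/2)}\), so the term is at most roughly \(Z_{m-1}\sqrt{\pi(2r+1/2)}\,r^{-2}u^r\). I then need \(\sqrt{\pi(2r+1/2)}\le 2\sqrt2\,r^{1/2}\) for \(r\ge1\), which holds since \(\pi\cdot 5/2\le 8\). The odd-class reciprocal term is analogous with \(k=2r+1\) and \((2r+1)^2\) in the denominator; there the constant \(2\) must absorb \(4\sqrt{\pi(2r+3/2)}/(2r+1)^2\) against \((r+1)^{-3/2}\). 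Finally, for \(r\ge N+1\) I use \(r^{-3/2}\le(N+1)^{-3/2}\) and sum the geometric series \(\sum_{r>N}u^r=u^{N+1}/(1-u)\).

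At the endpoint \(u=1\) the geometric trick fails, so instead I sum the termwise bound \(C_m r^{-3/2}\) directly. By comparison with an integral,
\[
\sum_{r>N}r^{-3/2}\le (N+1)^{-3/2}+\int_{N+1}^\infty t^{-3/2}\,dt\le 3(N+1)^{-1/2},
\]
and for \(N\ge1\), \(\sum_{r>N}r^{-3/2}\le\int_N^\infty t^{-3/2}\,dt=2N^{-1/2}\). For the odd-residue families, where my termwise bound is in terms of \((r+1)^{-3/2}\) or similar, the same comparison applies after shifting the index.

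I expect the main obstacle to be the constant bookkeeping in the odd-residue cases, especially the reciprocal odd-class bound with constant \(2\) and the treatment of the small \(r\) values (\(r=0\) is included in those sums). If the uniform Stirling inequality is too loose at \(r=0\) or \(r=1\), I would verify those few terms numerically and apply the asymptotic inequality only from \(r\ge2\) onward.
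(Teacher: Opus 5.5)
Your plan is the paper's proof. The bound \(e_m(a_1,a_2,\ldots)\le(\sum_j a_j)^m/m!\) gives \(T_m\) and \(Z_{m-1}\). Wallis-type bounds turn each term into \(C_m r^{-3/2}u^r\). The geometric sum then gives the interior bounds, and the integral comparison gives the endpoint bounds. Your first three cases are fine as written, including the check \(5\pi/16\le1\) in the even reciprocal class.

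The odd-class reciprocal case, the step you flagged, is set up incorrectly. This is a bookkeeping slip, not a missing idea. With \(k=2r+1\) one has \(4^{2r+1}=4^k\) and \(\binom{4r+2}{2r+1}=\binom{2k}{k}\). Your lower binomial bound therefore gives the term \(\le Z_{m-1}\sqrt{\pi(2r+3/2)}/(2r+1)^2\), with no extra factor \(4\).

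The inequality you propose to verify, with the factor \(4\), is false. Against \(2(r+1)^{-3/2}\) it already fails at \(r=1\), about \(1.47\) versus \(0.71\). Against \(2r^{-3/2}\) the ratio tends to \(\sqrt{\pi/2}>1\) as \(r\to\infty\). So your fallback of checking a few small \(r\) numerically cannot repair it.

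With the correct expression the bound is immediate. Since \(k\ge3\), you have \(\sqrt{\pi(k+1/2)}\le2\sqrt k\), so the term is \(\le2Z_{m-1}(2r+1)^{-3/2}\le2Z_{m-1}r^{-3/2}\). The paper gets this directly from \(4^n/\binom{2n}{n}\le2\sqrt n\), which also gives the \(2\sqrt2\) constant without your \(5\pi/16\) check.

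Finally, every tail runs over \(r\ge N+1\ge1\), so \(r=0\) never occurs. The right comparison function is \(r^{-3/2}\), not \((r+1)^{-3/2}\), and no index shift is needed at \(u=1\).
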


\begin{proof}
The elementary symmetric estimate
\[
      e_m(a_1,a_2,\ldots)\le \frac1{m!}\left(\sum_j a_j\right)^m
\]
with nonnegative \(a_j\) gives
\[
      t_n(\{2\}^m)\le T_m,
      \qquad
      \zeta_n(\{2\}^m)\le Z_m.
\]
We use the standard central-binomial bounds
\begin{equation}\label{eq:central-binomial-bounds}
      \frac{\binom{2n}{n}}{4^n}\le \frac1{\sqrt{\pi n}},
      \qquad
      \frac{4^n}{\binom{2n}{n}}\le 2\sqrt n,
      \qquad n\ge1.
\end{equation}
They follow, for instance, from Wallis' inequalities.  Applying \eqref{eq:central-binomial-bounds} with \(n=2r\) gives
\[
 \frac{\binom{4r}{2r}}{16^r(4r+1)}t_{2r}(\{2\}^m)
 \le \frac{T_m}{4\sqrt{2\pi}}r^{-3/2},
\]
which proves \eqref{eq:tail-even-odd-explicit} after summing the geometric majorant.  The odd-residue estimate is the same with \(n=2r+1\), using \(4r+3\ge2r+1\).

For the reciprocal even class,
\[
      \frac{16^r}{r^2\binom{4r}{2r}}\zeta_{2r-1}(\{2\}^{m-1})
      \le 2\sqrt2\,Z_{m-1}r^{-3/2},
\]
and the odd class gives the constant \(2Z_{m-1}\).  In each case
\[
        \sum_{r>N}r^{-3/2}u^r
        \le (N+1)^{-3/2}\sum_{r>N}u^r
        =\frac{u^{N+1}}{(N+1)^{3/2}(1-u)}.
\]
For \(u=1\), the uniform endpoint estimate follows from
\[
   \sum_{r>N}r^{-3/2}=\sum_{r=N+1}^{\infty}r^{-3/2}
   \le (N+1)^{-3/2}+\int_{N+1}^{\infty}x^{-3/2}\dd x
   \le 3(N+1)^{-1/2}.
\]
For \(N\ge1\), the simpler comparison \(\sum_{r>N}r^{-3/2}\le\int_N^{\infty}x^{-3/2}\dd x=2N^{-1/2}\) gives the sharper endpoint form.
\end{proof}

The constants in Theorem \ref{thm:tail-estimates} are explicit admissible constants chosen for simplicity; they are not claimed to be optimal.

Taking \(u=1/16\), so that \(\sigma=1/2\), gives the simple accelerated evaluations
\begin{align}
 \sum_{r=0}^{\infty}
 \frac{\binom{4r}{2r}}{256^r(4r+1)}
 &=\frac\pi6+\arsinh\frac12,\label{eq:accelerated-one}\\
 \sum_{r=1}^{\infty}
 \frac{1}{r^2\binom{4r}{2r}}
 &=\frac{\pi^2}{9}-4\arsinh^2\frac12,\label{eq:accelerated-two}\\
 \sum_{r=0}^{\infty}
 \frac{\binom{4r}{2r}}{256^r(4r+1)}t_{2r}(\{2\})
 &=\frac16\left\{\left(\frac\pi6\right)^3-\arsinh^3\frac12\right\}.\label{eq:accelerated-three}
\end{align}
These follow from Theorem \ref{thm:interior-quadratic} by using \(\arcsin(1/2)=\pi/6\).

\section{Mellin deformation and log-sine moments}\label{sec:mellin}

\begin{theorem}\label{thm:odd-mellin}
Let \(m\ge0\).  For \(\Re s>-1\),
\begin{equation}\label{eq:Am-def}
 A_m(s):=
 \sum_{n=0}^{\infty}
 \frac{\binom{2n}{n}}{4^n}
 \frac{t_n(\{2\}^m)}{2n+s+1}
 =\frac1{(2m)!}\int_0^{\pi/2}\theta^{2m}\sin^s\theta\dd\theta.
\end{equation}
Consequently, for every integer \(k\ge0\),
\begin{equation}\label{eq:odd-log-moment}
 \sum_{n=0}^{\infty}
 \frac{\binom{2n}{n}}{4^n}
 \frac{t_n(\{2\}^m)}{(2n+1)^{k+1}}
 =\frac{(-1)^k}{k!(2m)!}
 \int_0^{\pi/2}\theta^{2m}\log^k(\sin\theta)\dd\theta.
\end{equation}
\end{theorem}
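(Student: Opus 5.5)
The plan is to specialize Corollary \ref{cor:coefficient-filtered-mellin} to the unfiltered case \(q=1\), \(r=0\). Then \(\omega=1\) and \(\eta_0=1\), and \eqref{eq:coeff-mellin-odd} reads, for \(\Re s>-1\),
\[
 A_m(s)=\frac1{(2m)!}\int_0^1 x^s\frac{(\arcsin x)^{2m}}{\sqrt{1-x^2}}\dd x .
\]
Substituting \(x=\sin\theta\) with \(\theta\in(0,\pi/2)\) gives \(\dd x/\sqrt{1-x^2}=\dd\theta\) and \(\arcsin x=\theta\), which is exactly \eqref{eq:Am-def}.

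Alternatively, one can argue directly from the expansion of \(\partial_x\OO\) used in the proof of Theorem \ref{thm:unified-filter-mellin}. Extracting \(z^m\) gives
\[
\sum_n \frac{\binom{2n}{n}}{4^n}\,t_n(\{2\}^m)\,x^{2n}=\frac{(\arcsin x)^{2m}}{(2m)!\sqrt{1-x^2}}.
\]
Multiplying by \(x^s\) and integrating termwise is justified by the \(O(n^{-3/2})\) bounds established there.

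For the log moments, I will differentiate \eqref{eq:Am-def} \(k\) times in \(s\) and then set \(s=0\). On the series side, \(\partial_s^k (2n+s+1)^{-1}=(-1)^k k!\,(2n+s+1)^{-k-1}\). At \(s=0\) this produces \((-1)^k k!\) times the left side of \eqref{eq:odd-log-moment}. Termwise differentiation is legitimate because the series in \eqref{eq:Am-def} converges locally uniformly on compact subsets of \(\Re s>-1\), by the estimates in the proof of Theorem \ref{thm:unified-filter-mellin}. Each term is holomorphic in \(s\), so the sum is holomorphic, and Weierstrass's theorem allows differentiation term by term.

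On the integral side, \(\partial_s^k\sin^s\theta=\log^k(\sin\theta)\sin^s\theta\). Differentiation under the integral is permitted by Lemma \ref{lem:mellin-operations}, or directly: near \(\theta=0\), \(\log^k\sin\theta\) is integrable against \(\sin^{\Re s}\theta\) for \(\Re s>-1\), and near \(\theta=\pi/2\) the integrand is bounded. Equating the two sides at \(s=0\) and dividing by \((-1)^k k!\) gives \eqref{eq:odd-log-moment}, using \(1/(-1)^k=(-1)^k\).

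The only delicate point is justifying the interchanges: termwise integration up to the endpoint \(x=1\), where \((1-x^2)^{-1/2}\) is singular, and termwise \(s\)-differentiation. Both are already covered by Theorem \ref{thm:unified-filter-mellin} and Lemma \ref{lem:mellin-operations}, so no new estimate should be needed.
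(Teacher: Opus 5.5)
Your proposal is correct and is essentially the paper's proof. The paper differentiates \eqref{eq:odd-coeff-identity} in \(x\), which is your ``alternative'' route, then multiplies by \(x^s\), integrates termwise over \((0,1)\) using Lemma \ref{lem:mellin-operations} with \(q=1\), substitutes \(x=\sin\theta\), and differentiates \(k\) times at \(s=0\). Your primary route through the case \(q=1\), \(r=0\) of Corollary \ref{cor:coefficient-filtered-mellin} is the same computation routed through the filtered statement.
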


\begin{proof}
Differentiating \eqref{eq:odd-coeff-identity} gives
\[
 \sum_{n=0}^{\infty}\frac{\binom{2n}{n}}{4^n}t_n(\{2\}^m)x^{2n}
 =\frac{(\arcsin x)^{2m}}{(2m)!\sqrt{1-x^2}}.
\]
Multiply by \(x^s\) and integrate over \((0,1)\).  Lemma \ref{lem:mellin-operations}, with \(q=1\), justifies termwise integration for \(\Re s>-1\).  The substitution \(x=\sin\theta\) gives \eqref{eq:Am-def}.  Differentiating \eqref{eq:Am-def} \(k\) times at \(s=0\) gives \eqref{eq:odd-log-moment}.
\end{proof}

\begin{theorem}\label{thm:even-mellin}
Let \(m\ge1\).  For \(\Re s>-2\),
\begin{equation}\label{eq:Bm-def}
 B_m(s):=
 \sum_{n=1}^{\infty}
 \frac{2\cdot4^n}{n\binom{2n}{n}}
 \frac{\zeta_{n-1}(\{2\}^{m-1})}{2n+s}
 =\frac{2^{2m}}{(2m-1)!}
 \int_0^{\pi/2}\theta^{2m-1}\sin^s\theta\dd\theta.
\end{equation}
Consequently, for every integer \(k\ge0\),
\begin{equation}\label{eq:even-log-moment}
 \sum_{n=1}^{\infty}
 \frac{2\cdot4^n}{n\binom{2n}{n}}
 \frac{\zeta_{n-1}(\{2\}^{m-1})}{(2n)^{k+1}}
 =\frac{(-1)^k2^{2m}}{k!(2m-1)!}
 \int_0^{\pi/2}\theta^{2m-1}\log^k(\sin\theta)\dd\theta.
\end{equation}
\end{theorem}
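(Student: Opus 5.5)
The plan mirrors the proof of Theorem \ref{thm:odd-mellin}, using the even kernel in place of the odd one.

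\emph{Step 1: a generating function for the summand without the factor \(1/(2n+s)\).} Differentiate \eqref{eq:even-coeff-identity} with respect to \(x\). The left side becomes
\[
\sum_{n\ge1}\frac{2n\,4^n}{n^2\binom{2n}{n}}\zeta_{n-1}(\{2\}^{m-1})x^{2n-1}
=\sum_{n\ge1}\frac{2\cdot4^n}{n\binom{2n}{n}}\zeta_{n-1}(\{2\}^{m-1})x^{2n-1}.
\]
The right side becomes
\[
\frac{2^{2m}(\arcsin x)^{2m-1}}{(2m-1)!\sqrt{1-x^2}}.
\]
This is also the coefficient of \(z^{m-1}\) in the differentiated even kernel displayed in the proof of Theorem \ref{thm:unified-filter-mellin}.

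\emph{Step 2: the Mellin transform.} Multiply both sides by \(x^{s+1}\) and integrate over \((0,1)\). Each term gives \(\int_0^1 x^{2n+s}\dd x=1/(2n+s+1)\), which is off by one from the target. So instead multiply by \(x^{s}\), which turns \(x^{2n-1}\) into \(x^{2n+s-1}\) and gives \(\int_0^1x^{2n+s-1}\dd x=1/(2n+s)\), exactly the denominator in \eqref{eq:Bm-def}. Termwise integration is justified for \(\Re s>-2\) by the case \(q=1\), \(r=0\) of Theorem \ref{thm:unified-filter-mellin} and Corollary \ref{cor:coefficient-filtered-mellin}. In fact \eqref{eq:coeff-mellin-even} with \(q=1\) (so \(\eta_0=1\)) is literally
\[
B_m(s)=\frac{2^{2m}}{(2m-1)!}\int_0^1x^s\frac{(\arcsin x)^{2m-1}}{\sqrt{1-x^2}}\dd x,
\]
so one may cite it directly. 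Near \(0\) the integrand is \(O(x^{\Re s+1})\) because \((\arcsin x)^{2m-1}=O(x)\) for \(m\ge1\), and near \(1\) it has an integrable square-root singularity, as in Lemma \ref{lem:mellin-operations}. The substitution \(x=\sin\theta\), \(\dd x/\sqrt{1-x^2}=\dd\theta\), then yields \(\int_0^{\pi/2}\theta^{2m-1}\sin^s\theta\dd\theta\), proving \eqref{eq:Bm-def}.

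\emph{Step 3: the log-moments.} Differentiate \eqref{eq:Bm-def} \(k\) times in \(s\) and set \(s=0\). On the left,
\[
\frac{\partial^k}{\partial s^k}(2n+s)^{-1}=(-1)^kk!\,(2n+s)^{-k-1}.
\]
On the right, differentiation produces \(\log^k(\sin\theta)\). Dividing by \((-1)^kk!\) gives \eqref{eq:even-log-moment}. Differentiation under both the sum and the integral is legitimate. The series converges locally uniformly on compact subsets of \(\Re s>-2\) by the \(O_{K,S}(n^{-3/2})\) bound in the proof of Theorem \ref{thm:unified-filter-mellin}, so it is holomorphic there and can be differentiated termwise. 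The integral side is covered by the last sentence of Lemma \ref{lem:mellin-operations}. The point \(s=0\) lies inside the half-plane.

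The only real obstacle is bookkeeping: choosing the power of \(x\) so that the denominator comes out as \(2n+s\) rather than \(2n+s+1\), and checking that the lower limit \(\Re s>-2\) matches the \(O(x)\) vanishing of \((\arcsin x)^{2m-1}\). Both points are already covered by the earlier lemmas.
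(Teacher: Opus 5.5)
Your proposal is correct and follows the paper's own proof essentially step for step: differentiate \eqref{eq:even-coeff-identity}, multiply by \(x^s\), integrate over \((0,1)\) with \(x=\sin\theta\), and differentiate \(k\) times at \(s=0\). The analytic justification comes from Lemma \ref{lem:mellin-operations}, or equivalently from the \(q=1\) case of Corollary \ref{cor:coefficient-filtered-mellin}; in a write-up, just delete the abandoned \(x^{s+1}\) attempt in Step 2.
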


\begin{proof}
Differentiating \eqref{eq:even-coeff-identity} gives
\[
 \sum_{n=1}^{\infty}
 \frac{2\cdot4^n}{n\binom{2n}{n}}\zeta_{n-1}(\{2\}^{m-1})x^{2n-1}
 =\frac{2^{2m}(\arcsin x)^{2m-1}}{(2m-1)!\sqrt{1-x^2}}.
\]
Multiply by \(x^s\), integrate over \((0,1)\), and use \(x=\sin\theta\).  Lemma \ref{lem:mellin-operations} justifies the operation for \(\Re s>-2\).  Differentiating at \(s=0\) gives \eqref{eq:even-log-moment}.
\end{proof}

The next theorem is the \(q=2\) coefficient form of Theorem \ref{thm:unified-filter-mellin} after separating the even and odd residue classes.  Equivalently, it is obtained by taking half-sums and half-differences of the unweighted Mellin identity and its alternating companion.  Its value at \(s=0\) recovers the quartic identities of Theorem \ref{thm:quadratic-filter}, while its derivatives at \(s=0\) produce the logarithmic companions of Section \ref{sec:quartic-log}.

\begin{theorem}\label{thm:quadratic-mellin}
Let \(L=\arsinh1\).  For \(m\ge0\) and \(\Re s>-1\),
\begin{align}
\sum_{r=0}^{\infty}
\frac{\binom{4r}{2r}}{16^r}
\frac{t_{2r}(\{2\}^m)}{4r+s+1}
&=\frac1{2(2m)!}\biggl\{
\int_0^{\pi/2}\theta^{2m}\sin^s\theta\dd\theta \notag\\
&\hspace{2.0cm}
+(-1)^m\int_0^L t^{2m}\sinh^s t\dd t\biggr\},\label{eq:quad-mellin-odd-even}\\
\sum_{r=0}^{\infty}
\frac{\binom{4r+2}{2r+1}}{2^{4r+2}}
\frac{t_{2r+1}(\{2\}^m)}{4r+s+3}
&=\frac1{2(2m)!}\biggl\{
\int_0^{\pi/2}\theta^{2m}\sin^s\theta\dd\theta \notag\\
&\hspace{2.0cm}
-(-1)^m\int_0^L t^{2m}\sinh^s t\dd t\biggr\}.\label{eq:quad-mellin-odd-odd}
\end{align}
For \(m\ge1\) and \(\Re s>-2\),
\begin{align}
\sum_{r=1}^{\infty}
\frac{16^r}{r\binom{4r}{2r}}
\frac{\zeta_{2r-1}(\{2\}^{m-1})}{4r+s}
&=\frac{2^{2m-1}}{(2m-1)!}\biggl\{
\int_0^{\pi/2}\theta^{2m-1}\sin^s\theta\dd\theta \notag\\
&\hspace{2.0cm}
+(-1)^m\int_0^L t^{2m-1}\sinh^s t\dd t\biggr\},\label{eq:quad-mellin-even-even}\\
\sum_{r=0}^{\infty}
\frac{2\cdot4^{2r+1}}{(2r+1)\binom{4r+2}{2r+1}}
\frac{\zeta_{2r}(\{2\}^{m-1})}{4r+s+2}
&=\frac{2^{2m-1}}{(2m-1)!}\biggl\{
\int_0^{\pi/2}\theta^{2m-1}\sin^s\theta\dd\theta \notag\\
&\hspace{2.0cm}
-(-1)^m\int_0^L t^{2m-1}\sinh^s t\dd t\biggr\}.\label{eq:quad-mellin-even-odd}
\end{align}
The identities are locally uniform in \(s\) on compact subsets of the indicated half-planes, and fixed finite derivatives with respect to \(s\) may be taken term by term.
\end{theorem}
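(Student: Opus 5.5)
The plan is to specialize Corollary \ref{cor:coefficient-filtered-mellin} to \(q=2\), so \(\omega=-1\), \(\eta_0=1\), \(\eta_1=i\). The two branch integrals are then evaluated by substitution. For the odd kernel, \eqref{eq:coeff-mellin-odd} with \(r\in\{0,1\}\) gives the residue-class sum over \(n\equiv r\pmod 2\) of \(\binom{2n}{n}4^{-n}t_n(\{2\}^m)/(2n+s+1)\). The result is \(\frac1{2(2m)!}\) times
\[
I_0(s)+(-1)^rI_1(s),
\]
where \(I_0(s)=\int_0^1x^s(\arcsin x)^{2m}(1-x^2)^{-1/2}\dd x\) and \(I_1(s)=\int_0^1x^s(\arcsin(ix))^{2m}(1+x^2)^{-1/2}\dd x\). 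The substitution \(x=\sin\theta\) turns \(I_0\) into \(\int_0^{\pi/2}\theta^{2m}\sin^s\theta\dd\theta\), exactly as in Theorem \ref{thm:odd-mellin}.

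For \(I_1\), I will use the principal-branch identity \(\arcsin(ix)=i\arsinh x\) for \(0\le x\le1\). This follows from \eqref{eq:principal-arcsin}: \(-i\Log(-x+\sqrt{1+x^2})\) has a positive real argument, so its value is \(i\log(x+\sqrt{1+x^2})\) after using \((-x+\sqrt{1+x^2})^{-1}=x+\sqrt{1+x^2}\). This is the same computation as \eqref{eq:arcsin-i}. Hence \((\arcsin(ix))^{2m}=(-1)^m(\arsinh x)^{2m}\). The substitution \(x=\sinh t\), with \(\dd x/\sqrt{1+x^2}=\dd t\) and \(t\in[0,L]\), gives \(I_1(s)=(-1)^m\int_0^L t^{2m}\sinh^s t\dd t\). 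Writing \(n=2r\) and \(n=2r+1\), so that \(2n+s+1\) becomes \(4r+s+1\) or \(4r+s+3\), gives \eqref{eq:quad-mellin-odd-even} and \eqref{eq:quad-mellin-odd-odd}.

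The even kernel is handled the same way through \eqref{eq:coeff-mellin-even}. The factor \(\eta_\ell(\arcsin(\eta_\ell x))^{2m-1}\) at \(\ell=1\) equals \(i\cdot i^{2m-1}(\arsinh x)^{2m-1}=(-1)^m(\arsinh x)^{2m-1}\). The result is \(\frac{2^{2m}}{2(2m-1)!}\) times
\[
\int_0^{\pi/2}\theta^{2m-1}\sin^s\theta\dd\theta\pm(-1)^m\int_0^Lt^{2m-1}\sinh^st\dd t,
\]
and \(\frac{2^{2m}}{2}=2^{2m-1}\) matches the stated prefactor. For the odd class, \(n=2r+1\) gives \eqref{eq:quad-mellin-even-odd} directly, with denominator \(4r+s+2\). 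For the even class, \(n=2r\) turns \(\frac{2\cdot4^{2r}}{2r\binom{4r}{2r}}\) into \(\frac{16^r}{r\binom{4r}{2r}}\) and \(2n+s\) into \(4r+s\), which gives \eqref{eq:quad-mellin-even-even}.

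The local uniformity and termwise differentiation in \(s\) follow from Corollary \ref{cor:coefficient-filtered-mellin} together with Lemma \ref{lem:mellin-operations} and the \(O_{K,S}(n^{-3/2})\) bounds in the proof of Theorem \ref{thm:unified-filter-mellin}. Within compact \(s\)-sets these bounds are uniform and absolutely summable, so the series of \(s\)-derivatives also converge locally uniformly (equivalently, by Weierstrass's theorem for holomorphic functions). The only delicate point is the branch bookkeeping for \(\arcsin(ix)\) and the sign produced by \(\eta_1=i\) in the even case. I expect that sign check, rather than the analysis, to be the main place where care is needed.
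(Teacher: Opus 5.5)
Your proposal is correct and is essentially the paper's argument. The paper substitutes \(x\mapsto ix\) in the differentiated coefficient identities to obtain the alternating companions, then takes half-sums and half-differences with Theorems~\ref{thm:odd-mellin} and~\ref{thm:even-mellin}; this is precisely the \(q=2\) case of Corollary~\ref{cor:coefficient-filtered-mellin} that you invoke, as the paper itself notes just before the theorem, and your branch value \(\arcsin(ix)=i\arsinh x\), the sign \(\eta_1 i^{2m-1}=(-1)^m\), and the reindexing \(n=2r,\,2r+1\) all check out.
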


\begin{proof}
Let
\[
      a_n^{(m)}=\frac{\binom{2n}{n}}{4^n}t_n(\{2\}^m).
\]
The odd Mellin identity \eqref{eq:Am-def} gives the unweighted sum of \(a_n^{(m)}/(2n+s+1)\).  Its alternating companion is obtained by replacing \(x\) with \(ix\) in the differentiated odd coefficient identity:
\[
 \sum_{n=0}^{\infty}(-1)^n a_n^{(m)}x^{2n}
 =\frac{(i\arsinh x)^{2m}}{(2m)!\sqrt{1+x^2}}
 =\frac{(-1)^m(\arsinh x)^{2m}}{(2m)!\sqrt{1+x^2}}.
\]
Multiplication by \(x^s\), integration over \((0,1)\), and the substitution \(x=\sinh t\) give
\[
 \sum_{n=0}^{\infty}\frac{(-1)^n a_n^{(m)}}{2n+s+1}
 =\frac{(-1)^m}{(2m)!}\int_0^L t^{2m}\sinh^s t\dd t.
\]
Half the sum and half the difference of the unweighted and alternating identities give \eqref{eq:quad-mellin-odd-even} and \eqref{eq:quad-mellin-odd-odd}.

For the reciprocal even kernel, put
\[
      c_n^{(m)}=\frac{2\cdot4^n}{n\binom{2n}{n}}\zeta_{n-1}(\{2\}^{m-1}).
\]
Theorem \ref{thm:even-mellin} gives the unweighted sum of \(c_n^{(m)}/(2n+s)\).  The alternating companion follows from replacing \(x\) by \(ix\) in the differentiated even coefficient identity and multiplying by \(i\):
\[
 \sum_{n=1}^{\infty}(-1)^n c_n^{(m)}x^{2n-1}
 =\frac{(-1)^m2^{2m}(\arsinh x)^{2m-1}}{(2m-1)!\sqrt{1+x^2}}.
\]
After multiplication by \(x^s\), integration, and \(x=\sinh t\), this gives
\[
 \sum_{n=1}^{\infty}\frac{(-1)^n c_n^{(m)}}{2n+s}
 =\frac{(-1)^m2^{2m}}{(2m-1)!}\int_0^L t^{2m-1}\sinh^s t\dd t.
\]
Taking half-sums and half-differences, and then substituting \(n=2r\) and \(n=2r+1\), gives \eqref{eq:quad-mellin-even-even} and \eqref{eq:quad-mellin-even-odd}.  At \(s=0\), the even-class identity contains the normalization
\[
       \frac{16^r}{r\binom{4r}{2r}}\frac1{4r}
       =\frac14\frac{16^r}{r^2\binom{4r}{2r}},
\]
which is why multiplication by \(4\) recovers the reciprocal quartic identity.  Lemma \ref{lem:mellin-operations} justifies the integrations and differentiations.
\end{proof}

\begin{remark}\label{rem:s-zero-recovery}
Setting \(s=0\) in \eqref{eq:quad-mellin-odd-even} gives \eqref{eq:quartic-even-residue-odd}.  Setting \(s=0\) in \eqref{eq:quad-mellin-even-even} and multiplying by \(4\) gives \eqref{eq:quartic-even-residue-even-scaled}, since
\[
       \frac{16^r}{r\binom{4r}{2r}}\frac1{4r}
       =\frac14\frac{16^r}{r^2\binom{4r}{2r}}.
\]
The odd residue formulas are recovered in the same way from \eqref{eq:quad-mellin-odd-odd} and \eqref{eq:quad-mellin-even-odd}.
\end{remark}

Two useful low-weight checks are
\begin{align}
 \sum_{n=0}^{\infty}
 \frac{\binom{2n}{n}}{4^n(2n+1)^2}
 &=\frac\pi2\log2,\label{eq:binomial-denom-square}\\
 \sum_{n=1}^{\infty}
 \frac{4^n}{n^3\binom{2n}{n}}
 &=\pi^2\log2-\frac72\zeta(3).\label{eq:reciprocal-denom-cube}
\end{align}
Indeed, they follow from \eqref{eq:odd-log-moment} and \eqref{eq:even-log-moment} using
\[
     \log(\sin\theta)=-\log2-\sum_{j=1}^{\infty}\frac{\cos(2j\theta)}{j},
     \qquad 0<\theta<\pi,
\]
plus
\[
\int_0^{\pi/2}\log(\sin\theta)\dd\theta=-\frac\pi2\log2,
\qquad
\int_0^{\pi/2}\theta\log(\sin\theta)\dd\theta
=-\frac{\pi^2}{8}\log2+\frac7{16}\zeta(3).
\]

\section{Quartic logarithmic companions}\label{sec:quartic-log}

The quadratic filter combined with Mellin integration gives logarithmic companions to the quartic identities.  Put
\begin{equation}\label{eq:rho-def}
        \rho:=\sqrt2-1=e^{-L},
        \qquad \rho^2=e^{-2L}.
\end{equation}
Since \(0<\rho^2<1\), all logarithms and polylogarithms in this section are taken on their real principal branches.

\begin{theorem}\label{thm:quartic-log-companions}
One has
\begin{multline}\label{eq:quartic-denom-square-polylog}
 \sum_{r=0}^{\infty}
 \frac{\binom{4r}{2r}}{16^r(4r+1)^2} \\
 =\frac\pi4\log2+\frac{L}{2}\log2-\frac{L^2}{4}
 +\frac{\pi^2}{24}-\frac14\Li_2(\rho^2),
\end{multline}
and
\begin{multline}\label{eq:quartic-reciprocal-cube-polylog}
 \sum_{r=1}^{\infty}
 \frac{16^r}{r^3\binom{4r}{2r}} \\
 =4\pi^2\log2+\frac{32}{3}L^3-16L^2\log2
 +16L\Li_2(\rho^2)+8\Li_3(\rho^2)-22\zeta(3).
\end{multline}
\end{theorem}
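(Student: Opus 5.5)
The plan is to obtain both identities by differentiating the quadratic Mellin identities of Theorem~\ref{thm:quadratic-mellin} once with respect to \(s\) at \(s=0\). Theorem~\ref{thm:quadratic-mellin} already permits termwise differentiation. The circular log-sine integrals are then evaluated from the values quoted after Remark~\ref{rem:s-zero-recovery}. The hyperbolic ones are evaluated by the expansion
\[
\log\sinh t=t-\log2+\log(1-e^{-2t})=t-\log 2-\sum_{k\ge1}\frac{e^{-2kt}}{k},\qquad t>0,
\]
integrated termwise over \([0,L]\) using \(e^{-2kL}=\rho^{2k}\).

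For \eqref{eq:quartic-denom-square-polylog}, take \(m=0\) in \eqref{eq:quad-mellin-odd-even}. Differentiating at \(s=0\) gives
\[
-\sum_{r\ge0}\frac{\binom{4r}{2r}}{16^r(4r+1)^2}
=\frac12\Bigl\{\int_0^{\pi/2}\log\sin\theta\dd\theta+\int_0^L\log\sinh t\dd t\Bigr\}.
\]
The first integral is \(-\tfrac\pi2\log2\). For the second, the expansion gives
\[
\int_0^L\log\sinh t\dd t=\frac{L^2}{2}-L\log2-\sum_{k\ge1}\frac{1-\rho^{2k}}{2k^2}
=\frac{L^2}{2}-L\log2-\frac{\pi^2}{12}+\frac12\Li_2(\rho^2).
\]
Substituting these values should reproduce the stated right side exactly.

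For \eqref{eq:quartic-reciprocal-cube-polylog}, take \(m=1\) in \eqref{eq:quad-mellin-even-even}, which carries the sign \(-\) in front of the hyperbolic integral. Differentiating once at \(s=0\) turns \(1/(4r+s)\) into \(-1/(4r)^2\). Hence
\[
-\frac1{16}\sum_{r\ge1}\frac{16^r}{r^3\binom{4r}{2r}}
=2\Bigl\{\int_0^{\pi/2}\theta\log\sin\theta\dd\theta-\int_0^L t\log\sinh t\dd t\Bigr\}.
\]
The first integral is \(-\tfrac{\pi^2}{8}\log2+\tfrac7{16}\zeta(3)\). For the second, the expansion gives
\[
\int_0^L t\log\sinh t\dd t=\frac{L^3}{3}-\frac{L^2}{2}\log2-\sum_{k\ge1}\frac1k\int_0^L te^{-2kt}\dd t .
\]
Using \(\int_0^L te^{-2kt}\dd t=\frac1{4k^2}-\rho^{2k}\bigl(\frac{L}{2k}+\frac1{4k^2}\bigr)\), the last sum equals \(\frac14\zeta(3)-\frac L2\Li_2(\rho^2)-\frac14\Li_3(\rho^2)\). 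Multiplying by \(-32\) and collecting terms should give the stated constants. In particular the \(\zeta(3)\) coefficient comes out as \(-14-8=-22\).

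The main obstacle is bookkeeping rather than analysis. One must track the sign \((-1)^m\), the normalisation factor \(1/16\) coming from \(1/(4r)^2\), and the prefactor \(2^{2m-1}/(2m-1)!=2\). One must also justify the termwise integration of the exponential series. This justification holds by monotone convergence, since all terms \(e^{-2kt}/k\) are positive, with integrable singularity \(\log t\) at \(0\). I would double-check the final constants against the low-weight checks \eqref{eq:binomial-denom-square} and \eqref{eq:reciprocal-denom-cube}.
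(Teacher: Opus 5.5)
Your proposal is correct, and your intermediate values assemble exactly to the stated right sides, including the $\zeta(3)$ coefficient $-14-8=-22$. For \eqref{eq:quartic-denom-square-polylog} your argument is the paper's.

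For \eqref{eq:quartic-reciprocal-cube-polylog} the paper only names the same Mellin derivative of \eqref{eq:quad-mellin-even-even}. It then evaluates the sum by a different route.

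\begin{itemize}
\item It takes the even part of $C(y)=\sum_{n\ge1}4^ny^n/(n^3\binom{2n}{n})=4\int_0^{\sqrt y}(\arcsin u)^2u^{-1}\dd u$. This writes the series as $16(I-J)$, with $I=\int_0^{\pi/2}\theta^2\cot\theta\dd\theta$ and $J=\int_0^L t^2\coth t\dd t$.
\item It reads off $I$ from \eqref{eq:reciprocal-denom-cube}.
\item It computes $J$ via $v=e^{-2t}$ and the antiderivative $-\log^2v\log(1-v)-2\log v\,\Li_2(v)+2\Li_3(v)$. This step needs $\log(1-\rho^2)=\log2-L$ to resolve the boundary term.
\end{itemize}

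The two computations differ only by an integration by parts. Since $\sin(\pi/2)=\sinh L=1$, the boundary terms vanish, so $I=-2\int_0^{\pi/2}\theta\log\sin\theta\dd\theta$ and $J=-2\int_0^L t\log\sinh t\dd t$. Hence $16(I-J)$ is exactly your $-32\{\cdots\}$.

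Your route has two advantages. It treats both identities by one mechanism: a Mellin derivative, followed by the positive series $-\log(1-e^{-2t})=\sum_k e^{-2kt}/k$, integrated termwise by monotone convergence. It also produces the $L^2\log2$ term directly from $\log\sinh t=t-\log2+\log(1-e^{-2t})$, rather than through a $\log(1-\rho^2)$ boundary term.

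The paper's route has different advantages. It needs no differentiation in $s$ for the second identity. The sign in front of $J$ is visible at once through $\arcsin(iu)=i\arsinh u$. And the circular part is tied directly to the already established unfiltered evaluation, instead of to the separately quoted log-sine moment.
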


\begin{proof}
Both identities are obtained from Theorem \ref{thm:quadratic-mellin} by differentiating at \(s=0\).  The differentiation is justified by the local uniformity in the Mellin parameter established in Theorem \ref{thm:quadratic-mellin}.  For \eqref{eq:quartic-denom-square-polylog}, take \(m=0\) in \eqref{eq:quad-mellin-odd-even}; then
\[
 \sum_{r=0}^{\infty}\frac{\binom{4r}{2r}}{16^r(4r+1)^2}
 =-\frac12\left\{
 \int_0^{\pi/2}\log(\sin\theta)\dd\theta
 +\int_0^L\log(\sinh t)\dd t\right\}.
\]
This displayed formula is precisely the derivative at \(s=0\) of the quadratic Mellin identity.  It is also the same integral identity obtained by taking the even part of the central-binomial generating function, namely
\begin{equation}\label{eq:even-part-binomial}
 \sum_{r=0}^{\infty}\frac{\binom{4r}{2r}}{16^r}x^{4r}
 =\frac12\left(\frac1{\sqrt{1-x^2}}+\frac1{\sqrt{1+x^2}}\right),
        \qquad 0<x<1.
\end{equation}
Multiplying by \(-\log x\) and integrating from \(0\) to \(1\) gives the left side of \eqref{eq:quartic-denom-square-polylog}.  The first integral is
\[
        \int_0^1\frac{-\log x}{\sqrt{1-x^2}}\dd x
        =\frac\pi2\log2.
\]
For the second integral put \(x=\sinh t\), so \(0\le t\le L\).  Then
\[
 \int_0^1\frac{-\log x}{\sqrt{1+x^2}}\dd x
 =-\int_0^L\log(\sinh t)\dd t.
\]
Since
\[
        \log(\sinh t)=t+\log(1-e^{-2t})-\log2
\]
and
\[
        \int\log(1-e^{-2t})\dd t=\frac12\Li_2(e^{-2t}),
\]
we get
\[
 -\int_0^L\log(\sinh t)\dd t
 =L\log2-\frac{L^2}{2}-\frac12\Li_2(\rho^2)+\frac{\pi^2}{12}.
\]
Taking half the sum of the two integrals proves \eqref{eq:quartic-denom-square-polylog}.

For \eqref{eq:quartic-reciprocal-cube-polylog}, take \(m=1\) in \eqref{eq:quad-mellin-even-even}; equivalently, the required series is \(-16\) times the derivative at \(s=0\) of the left side of \eqref{eq:quad-mellin-even-even}.  We now evaluate the same quantity in a form that keeps the signs transparent.  Define
\[
        C(y):=\sum_{n=1}^{\infty}\frac{4^n}{n^3\binom{2n}{n}}y^n.
\]
Since \(G_1(y)=2(\arcsin\sqrt y)^2\),
\[
        C(y)=\int_0^y\frac{G_1(t)}{t}\dd t
        =4\int_0^{\sqrt y}\frac{(\arcsin u)^2}{u}\dd u.
\]
The even part gives
\begin{equation}\label{eq:even-part-C}
 \frac18\sum_{r=1}^{\infty}\frac{16^r}{r^3\binom{4r}{2r}}
 =\frac{C(1)+C(-1)}2.
\end{equation}
Let
\[
        I:=\int_0^1\frac{(\arcsin u)^2}{u}\dd u,
        \qquad
        J:=\int_0^1\frac{(\arsinh u)^2}{u}\dd u.
\]
Because \(\arcsin(iu)=i\arsinh u\), \eqref{eq:even-part-C} becomes
\begin{equation}\label{eq:T-IJ}
        \sum_{r=1}^{\infty}\frac{16^r}{r^3\binom{4r}{2r}}=16(I-J).
\end{equation}
Identity \eqref{eq:reciprocal-denom-cube} gives
\begin{equation}\label{eq:I-value}
        I=\frac{\pi^2}{4}\log2-\frac78\zeta(3).
\end{equation}
For \(J\), put \(u=\sinh t\).  Then
\[
        J=\int_0^L t^2\coth t\dd t
        =\frac{L^3}{3}+2\int_0^L\frac{t^2}{e^{2t}-1}\dd t.
\]
With \(a=\rho^2=e^{-2L}\), the substitution \(v=e^{-2t}\) gives
\[
 \int_0^L\frac{t^2}{e^{2t}-1}\dd t
 =\frac18\int_a^1\frac{\log^2v}{1-v}\dd v.
\]
An antiderivative is
\[
 -\log^2v\log(1-v)-2\log v\,\Li_2(v)+2\Li_3(v).
\]
Here \(a=\rho^2\), \(\rho=e^{-L}\), and the elementary relations
\[
        1-\rho^2=2\rho,
        \qquad
        \log(1-\rho^2)=\log2-L,
        \qquad
        \log(\rho^2)=-2L
\]
fix the signs in the polylogarithmic terms.  Using these relations, we obtain
\begin{equation}\label{eq:J-value}
        J=\frac12\zeta(3)+L^2\log2-\frac23L^3
        -L\Li_2(\rho^2)-\frac12\Li_3(\rho^2).
\end{equation}
Substitution of \eqref{eq:I-value} and \eqref{eq:J-value} into \eqref{eq:T-IJ} proves \eqref{eq:quartic-reciprocal-cube-polylog}.
\end{proof}

\section{Examples beyond the quadratic filter}\label{sec:other-filters}

The root-of-unity theorem is not only a parity device.  We record two compact examples: a cubic residue filter and a Dirichlet-character filter.

\subsection*{The cubic filter}

Let
\begin{align}
       \omega&=e^{2\pi i/3},
       &\lambda&=\frac{3^{1/4}}{\sqrt2},\label{eq:cubic-constants}\\
       a&:=\arccos\lambda,
       &b&:=\arsinh\lambda,
       &\Theta&:=a+ib.\notag
\end{align}
Then \(\Theta\) lies in the principal range of \(\arcsin\), and
\begin{equation}\label{eq:cubic-arcsin}
       \arcsin(e^{i\pi/3})=\Theta,
       \qquad
       \arcsin(e^{-i\pi/3})=\overline{\Theta}.
\end{equation}
Indeed, \(\sin(a+ib)=1/2+i\sqrt3/2\), and \(a\in(0,\pi/2)\), \(b>0\).

For \(r\in\{0,1,2\}\), Theorem \ref{thm:root-unity} gives
\begin{multline}\label{eq:cubic-odd-filter}
 \sum_{\substack{n\ge0\\ n\equiv r\pmod3}}
 \frac{\binom{2n}{n}}{4^n(2n+1)}t_n(\{2\}^m)  \\
 =\frac1{3(2m+1)!}
 \left\{\left(\frac\pi2\right)^{2m+1}
 +2\Real\left(e^{-i(2r+1)\pi/3}\Theta^{2m+1}\right)\right\},
\end{multline}
for \(m\ge0\), and
\begin{multline}\label{eq:cubic-even-filter}
 \sum_{\substack{n\ge1\\ n\equiv r\pmod3}}
 \frac{4^n}{n^2\binom{2n}{n}}\zeta_{n-1}(\{2\}^{m-1}) \\
 =\frac{2^{2m}}{3(2m)!}
 \left\{\left(\frac\pi2\right)^{2m}
 +2\Real\left(e^{-2\pi i r/3}\Theta^{2m}\right)\right\},
\end{multline}
for \(m\ge1\).  For instance,
\begin{equation}\label{eq:cubic-example-odd}
 \sum_{r=0}^{\infty}
 \frac{\binom{6r}{3r}}{64^r(6r+1)}
 =\frac{\pi/2+a+\sqrt3\,b}{3},
\end{equation}
\begin{equation}\label{eq:cubic-example-even}
 \sum_{r=1}^{\infty}
 \frac{64^r}{(3r)^2\binom{6r}{3r}}
 =\frac{\pi^2}{6}+\frac43(a^2-b^2).
\end{equation}

\subsection*{A character filter}

Let \(\chi_4\) be the nontrivial Dirichlet character modulo \(4\): \(\chi_4(n)=0\) for even \(n\), \(\chi_4(n)=1\) for \(n\equiv1\pmod4\), and \(\chi_4(n)=-1\) for \(n\equiv3\pmod4\).  Put
\[
      A:=\arctan\sqrt{\sqrt2-1},
      \qquad
      B:=\arsinh(2^{-1/4}),
      \qquad
      \Theta:=A+iB.
\]
Then \(\Theta\) is the principal value of \(\arcsin(e^{i\pi/4})\).  Indeed,
\[
 \sin A\cosh B=\frac1{\sqrt2},\qquad \cos A\sinh B=\frac1{\sqrt2},
\]
so \(\sin(A+iB)=e^{i\pi/4}\), and \(A\in(0,\pi/2)\), \(B>0\).  Since
\[
        \chi_4(n)=\frac{i^n-(-i)^n}{2i},
\]
Theorem \ref{thm:periodic-weight} gives
\begin{equation}\label{eq:character-filter}
\sum_{n=0}^{\infty}\chi_4(n)
\frac{\binom{2n}{n}}{4^n(2n+1)}t_n(\{2\}^m)
=\frac{\Imag\left(e^{-i\pi/4}\Theta^{2m+1}\right)}{(2m+1)!},
\end{equation}
where \(\Theta=A+iB\) is the principal value described above.  Similarly,
\begin{equation}\label{eq:character-even-filter}
\sum_{n=1}^{\infty}\chi_4(n)
\frac{4^n}{n^2\binom{2n}{n}}\zeta_{n-1}(\{2\}^{m-1})
=\frac{2^{2m}\Imag(\Theta^{2m})}{(2m)!}.
\end{equation}
For \(m=0\), \eqref{eq:character-filter} gives
\begin{equation}\label{eq:character-m0}
\sum_{n=0}^{\infty}\chi_4(n)
\frac{\binom{2n}{n}}{4^n(2n+1)}
=\frac{B-A}{\sqrt2}.
\end{equation}

The logarithmic companions are therefore not isolated evaluations; they are the first Mellin derivatives of the same quadratic filtered-kernel identities that give the main quartic sums.

\section{Conclusion}\label{sec:conclusion}

The two hyperbolic arcsine kernels provide a common analytic setting for the classical odd-square and ordinary-square arcsine coefficient families.  In this setting, finite Fourier projection can be performed before specialization, and the quadratic projection gives the quartic subsequence identities \eqref{eq:intro-main-odd} and \eqref{eq:intro-main-even}.  Mellin deformation of the same projected identities gives the denominator-power and logarithmic companions, so these evaluations come from one mechanism rather than from separate computations.

The methods used here stay within exact identities, generating functions, and special-function transformations.  They do not require, or imply, transcendence, independence, modular, or automorphic assertions.  The resulting framework gives a systematic way to produce residue-class, accelerated, and Mellin-deformed central-binomial harmonic identities.

\appendix

\section{Bell-polynomial coefficient dictionary}\label{app:bell}

Let \(Y_m\) denote the complete exponential Bell polynomial,
\begin{equation}\label{eq:bell-def}
 \exp\left(\sum_{r\ge1}x_r\frac{u^r}{r!}\right)
 =\sum_{m\ge0}Y_m(x_1,\ldots,x_m)\frac{u^m}{m!}.
\end{equation}
Put
\begin{equation}\label{eq:A-r-def}
      A_r(n):=4^rH_{2n}^{(2r)}-H_n^{(2r)}=4^rO_n^{(2r)}.
\end{equation}
Taking logarithms in Lemma \ref{lem:finite-products} gives
\begin{equation}\label{eq:zeta-bell}
\zeta_{n-1}(\{2\}^m)
=\frac1{m!}
Y_m\left(H_{n-1}^{(2)},-1!H_{n-1}^{(4)},\ldots,
(-1)^{m-1}(m-1)!H_{n-1}^{(2m)}\right),
\end{equation}
\begin{equation}\label{eq:t-bell}
 t_n(\{2\}^m)
=\frac1{4^m m!}
Y_m\left(A_1(n),-1!A_2(n),\ldots,
(-1)^{m-1}(m-1)!A_m(n)\right).
\end{equation}
For example,
\[
      t_n(\{2\})=\frac14A_1(n),
      \qquad
      t_n(\{2\}^2)=\frac{A_1(n)^2-A_2(n)}{32},
\]
\[
      \zeta_{n-1}(\{2\})=H_{n-1}^{(2)},
      \qquad
      \zeta_{n-1}(\{2\}^2)=\frac{(H_{n-1}^{(2)})^2-H_{n-1}^{(4)}}2.
\]

\section{Direct quartic kernels}\label{app:literal}

A natural comparison is obtained by replacing the central binomial coefficient directly,
\[
      \binom{2n}{n}\quad\rightsquigarrow\quad \binom{4n}{2n}.
\]
This operation leads to higher hypergeometric kernels.  The calculation below explains the contrast with the filtered quartic subsequences used in the main text.

\begin{theorem}\label{thm:literal-quartic}
Let \(\alpha=\sqrt z\).  For \(|x|<1\), define
\begin{equation}\label{eq:literal-odd-def}
 \OO_4(z,x):=
 \sum_{n=0}^{\infty}
 \frac{\binom{4n}{2n}}{16^n(2n+1)}
 \prod_{j=1}^{n}\left(1+\frac{z}{(2j-1)^2}\right)x^{2n+1}.
\end{equation}
Then
\begin{equation}\label{eq:literal-odd-closed}
 \OO_4(z,x)
 =x\,{}_4F_3\left(
 \begin{matrix}
 1/4,3/4,1/2+i\alpha/2,1/2-i\alpha/2\\[2pt]
 1/2,1/2,3/2
 \end{matrix};x^2\right).
\end{equation}
For
\begin{equation}\label{eq:literal-even-def}
 \EE_4(z,x):=
 \sum_{n=1}^{\infty}
 \frac{16^n}{n^2\binom{4n}{2n}}
 \prod_{j=1}^{n-1}\left(1+\frac{z}{j^2}\right)x^{2n},
\end{equation}
one has
\begin{equation}\label{eq:literal-even-closed}
 \EE_4(z,x)
 =\frac83x^2\,{}_4F_3\left(
 \begin{matrix}
 1,3/2,1+i\alpha,1-i\alpha\\[2pt]
 2,5/4,7/4
 \end{matrix};x^2\right).
\end{equation}
Moreover,
\begin{equation}\label{eq:literal-z0}
 \OO_4(0,1)=
 \sum_{n=0}^{\infty}
 \frac{\binom{4n}{2n}}{16^n(2n+1)}
 =\sqrt2.
\end{equation}
\end{theorem}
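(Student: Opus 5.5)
The plan is to identify both series as hypergeometric series by writing each coefficient as a ratio of Pochhammer symbols, as in the proof of Theorem~\ref{thm:kernels}. For $z=0$ the evaluation is done separately through an elementary generating function and an Abel limit.

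\textbf{Odd kernel.} First I would use the duplication formula $(a)_{2n}=4^n(a/2)_n((a+1)/2)_n$ with $a=1/2$. This gives
\[
\frac{\binom{4n}{2n}}{16^n}=\frac{(1/2)_{2n}}{(2n)!}=\frac{4^n(1/4)_n(3/4)_n}{4^n\,n!\,(1/2)_n}=\frac{(1/4)_n(3/4)_n}{(1/2)_n\,n!}.
\]
Combine this with $1/(2n+1)=(1/2)_n/(3/2)_n$ and with \eqref{eq:P-pochhammer} for the product. The coefficient of $x^{2n+1}$ then becomes
\[
\frac{(1/4)_n(3/4)_n(1/2+i\alpha/2)_n(1/2-i\alpha/2)_n}{(1/2)_n(1/2)_n(3/2)_n\,n!}.
\]
This is exactly \eqref{eq:literal-odd-closed}. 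Convergence for $|x|<1$ and entirety in $z$ follow as in Lemma~\ref{lem:product-bounds}, since $\binom{4n}{2n}16^{-n}=O(n^{-1/2})$.

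\textbf{Even kernel.} Here I would shift $n=k+1$ and compute the term ratio of $c_k=16^{k+1}/((k+1)^2\binom{4k+4}{2k+2})$. We have
\[
\frac{\binom{4k+4}{2k+2}}{\binom{4k}{2k}}=\frac{(4k+1)(4k+2)(4k+3)(4k+4)}{(2k+1)^2(2k+2)^2}.
\]
Hence
\[
\frac{c_{k+1}}{c_k}=\frac{16(k+1)^2(2k+3)^2(2k+4)^2}{(k+2)^2(4k+5)(4k+6)(4k+7)(4k+8)}
=\frac{(k+1)(k+3/2)}{(k+2)(k+5/4)(k+7/4)}.
\]
So, apart from the product, the series has numerator parameters $1,3/2$ and denominator parameters $2,5/4,7/4$; the factor $k+1$ supplies the $k!$. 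Multiplying by \eqref{eq:Q-pochhammer} supplies $(1\pm i\alpha)_k/k!$, which contributes the numerator parameters $1\pm i\alpha$ and one more $k!$ absorbed into the ${}_4F_3$ normalization. The leading term is $c_0=16/(1\cdot 6)=8/3$, which gives \eqref{eq:literal-even-closed}. The only real risk here is bookkeeping in this ratio, so I would double-check it by comparing the $n=2$ coefficient directly.

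\textbf{The value $\OO_4(0,1)$.} At $z=0$ the product equals $1$. Rather than evaluate the ${}_4F_3$, which degenerates to ${}_2F_1(1/4,3/4;3/2;x^2)$, I would argue from the even part of the binomial series. Since $\binom{4n}{2n}16^{-n}$ is the coefficient of $y^{2n}$ in $(1-y)^{-1/2}$,
\[
\sum_{n\ge0}\frac{\binom{4n}{2n}}{16^n}y^{2n}=\frac12\left((1-y)^{-1/2}+(1+y)^{-1/2}\right),\qquad |y|<1.
\]
Integrating from $0$ to $x$ term by term, which is legitimate inside the disc, gives
\[
\OO_4(0,x)=\sqrt{1+x}-\sqrt{1-x}.
\]
The coefficients of $\OO_4(0,x)$ are positive and $O(n^{-3/2})$, so the series converges absolutely at $x=1$. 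Abel's theorem, as in Lemma~\ref{lem:boundary-convergence}, then gives $\OO_4(0,1)=\lim_{x\to1^-}(\sqrt{1+x}-\sqrt{1-x})=\sqrt2$.

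I do not expect a genuine obstacle in any of these steps; the even-kernel parameter bookkeeping is the most error-prone part.
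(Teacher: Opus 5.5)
There is a concrete error in your even-kernel step, and your parameter bookkeeping hides it. The odd-kernel identification is fine. Your first expression for $c_{k+1}/c_k$ is correct, but it does not simplify to a ratio with a single factor $k+1$. Using $(2k+4)^2=4(k+2)^2$, $4k+6=2(2k+3)$ and $4k+8=4(k+2)$, one gets
\[
\frac{c_{k+1}}{c_k}=\frac{8(k+1)^2(2k+3)}{(k+2)(4k+5)(4k+7)}
=\frac{(k+1)^2(k+3/2)}{(k+2)(k+5/4)(k+7/4)}.
\]
At $k=1$ this is $160/297$, whereas your formula gives $80/297$.

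Taken at face value, your ratio gives $c_k=\frac83(1)_k(3/2)_k/\bigl((2)_k(5/4)_k(7/4)_k\bigr)$. Multiplying by \eqref{eq:Q-pochhammer} then produces the term of ${}_3F_3(3/2,1+i\alpha,1-i\alpha;2,5/4,7/4;x^2)$, not the stated ${}_4F_3$. You reach the ${}_4F_3$ only by counting the one factor $k+1$ twice: once as the numerator parameter $1$, and once as the factor that ``supplies the $k!$''.

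With the correct ratio, $c_k=\frac83(1)_k^2(3/2)_k/\bigl((2)_k(5/4)_k(7/4)_k\bigr)$. The factor $(1)_k^2=(k!)^2$ cancels the denominator in \eqref{eq:Q-pochhammer}. Writing $1=(1)_k/k!$ then gives exactly the Pochhammer identity displayed in the paper's proof, and hence \eqref{eq:literal-even-closed}.

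Your proposed check at $n=2$ could not have caught this slip, because the missing factor equals $1$ at $k=0$. The first informative coefficient is $n=3$: there $c_2=16^3/\bigl(9\binom{12}{6}\bigr)=1024/2079$, while your ratio gives $512/2079$.

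For $\OO_4(0,1)$ your route is correct but differs from the paper's. The paper cancels the two parameters $1/2$ and applies Gauss's summation, ${}_2F_1(1/4,3/4;3/2;1)=\Gamma(3/2)\Gamma(1/2)/\bigl(\Gamma(5/4)\Gamma(3/4)\bigr)=\sqrt2$. You instead integrate the even part of $(1-y)^{-1/2}$ to get the closed form $\OO_4(0,x)=\sqrt{1+x}-\sqrt{1-x}$, then pass to $x=1$ by Abel's theorem. Your argument is more elementary and gives the whole function rather than only its endpoint value; the paper's is shorter once Gauss's theorem is taken as known.
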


\begin{proof}
The identity
\begin{equation}\label{eq:quartic-binomial-pochhammer}
      \frac{\binom{4n}{2n}}{16^n}
      =\frac{(1/4)_n(3/4)_n}{(1/2)_n n!}
\end{equation}
combined with \eqref{eq:P-pochhammer} and \((3/2)_n=(2n+1)(1/2)_n\) gives \eqref{eq:literal-odd-closed}.  For the reciprocal kernel, put \(n=k+1\).  Then
\[
 \frac{16^{k+1}}{(k+1)^2\binom{4k+4}{2k+2}}
 \frac{(1+i\alpha)_k(1-i\alpha)_k}{(k!)^2}
 =\frac83
 \frac{(1)_k(3/2)_k(1+i\alpha)_k(1-i\alpha)_k}
 {(2)_k(5/4)_k(7/4)_k k!},
\]
which proves \eqref{eq:literal-even-closed}.  At \(z=0\), the two numerator parameters \(1/2\) cancel the two denominator parameters \(1/2\), and Gauss's theorem gives
\[
      \OO_4(0,1)
      ={}_2F_1\left(\begin{matrix}1/4,3/4\\[2pt]3/2\end{matrix};1\right)
      =\frac{\Gamma(3/2)\Gamma(1/2)}{\Gamma(5/4)\Gamma(3/4)}
      =\sqrt2.
\]
\end{proof}

Higher coefficients in the parameter \(z\) belong to this direct quartic hypergeometric problem.  They are independent of the filtered subsequence construction used for the main quartic identities.

\section{Numerical checks}\label{app:numerical}
All displayed identities in the paper are proved analytically.  High-precision numerical checks were used only to verify normalizations and branch choices during preparation.  They are not part of the proof and are therefore not tabulated here.

\end{document}